\newtheorem{lemma}{Lemma}[section]
\newtheorem{proposition}[lemma]{Proposition}
\theoremstyle{remark}
\DeclareMathOperator*{\argmin}{argmin}
\newcommand{\real}{\mathbb{R}}
\newcommand{\interpol}{\mathcal{I}}
\newcommand{\transforms}{\mathcal{T}}
\newcommand{\param}{\mathcal{P}}
\newcommand{\bmu}{{\bar{\mu}}}
\newcommand{\hmu}{{\hat{\mu}}}
\newcommand{\heta}{{\hat{\eta}}}
\newcommand{\tsi}{\mathcal{TI}}
\newcommand{\tsihp}{\mathcal{T}_{hp}\mathcal{I}}
\newcommand{\grid}{\mathcal{T}}
\DeclareMathOperator*{\id}{Id}
\newcommand{\muSingular}{\bar{\mu}}
\begin{document}

\title{$h$ and $hp$-adaptive Interpolation by Transformed Snapshots for Parametric and Stochastic Hyperbolic PDEs}

\author{G. Welper\footnote{Department of Mathematics, University of Southern California, Los Angeles, CA 90089, USA, email \href{mailto:welper@usc.edu}{\texttt{welper@usc.edu}}}}
\date{}

\maketitle

\begin{abstract}

The numerical approximation of solutions of parametric or stochastic hyperbolic PDEs is still a serious challenge. Because of shock singularities, most methods from the elliptic and parabolic regime, such as reduced basis methods, POD or polynomial chaos expansions, show a poor performance. Recently, Welper [Interpolation of functions with parameter dependent jumps
by transformed snapshots. SIAM Journal on Scientific Computing,
39(4):A1225–A1250, 2017] introduced a new approximation method, based on the alignment of the jump sets of the snapshots. If the structure of the jump sets changes with parameter, this assumption is too restrictive. However, these changes are typically local in parameter space, so that in this paper, we explore $h$ and $hp$-adaptive methods to resolve them. Since local refinements do not scale to high dimensions, we introduce an alternative ``tensorized'' adaption method. 
 
\end{abstract}

\smallskip
\noindent \textbf{Keywords:} Parametric PDEs, shocks, transformations, interpolation, convergence rates, $hp$-adaptivity

\smallskip
\noindent \textbf{AMS subject classifications:} 41A46, 41A25, 35L67, 65M15

\section{Introduction}

During the last decade numerical methods for parametric and stochastic PDEs have evolved into a large research field. This lead to many methods for elliptic and parabolic problems such as the reduced basis method \cite{RozzaHuynhPatera2008,SenVeroyHuynhEtAl2006,PateraRozza2006}, Proper Orthogonal Decompositions (POD) \cite{Sirovich1987,KunischVolkwein2001,KunischVolkwein2002}, stochastic Galerkin and stochastic collocation methods \cite{XiuKarniadakis2002,BabuskaTemponeZouraris2004,BabuskaNobileTempone2007,GunzburgerWebsterZhang2014} among others. In comparison, the literature on the hyperbolic case is comparatively scarce. Stability issues and shocks cause significant difficulties and require fundamentally new concepts beyond the known methods.

One of the major obstructions is the prevalence of shock discontinuities in the solutions $u(x,\mu)$ of parametric hyperbolic problems, with physical variables $x \in \Omega \subset \real^m$ (possibly containing time) and parameters $\mu \in \param \subset \real^d$ (possibly random). It has been shown \cite{Welper2015,OhlbergerRave2016}
that already for simple examples no \emph{separation of variables} approach of the type
\begin{equation}
u(x,\mu) \approx u_n(x, \mu) := \sum_{i=1}^n c_i(\mu) \psi_i(x),
\label{eq:polyadic-decomposition}
\end{equation}
can achieve errors $\max_{\mu \in \param} \|u(\cdot, \mu) - u_n(
\cdot, \mu)\|_{L_1(\Omega)}$  with rates higher than $\mathcal{O}(n)$. Practical issues for more realistic problems can be found e.g. in \cite{IaccarinoPetterssonNordstroemWitteveen2010}. The choice of norms is natural in the context of reduced basis methods but other $L_p$ norms, like weighted $L_2$-norms in a stochastic setting, do not change the underlying issues. Note that most of the established methods such as reduced basis, POD or polynomial chaos expansions are based on separation of variables with different choices for $c_i$ and $\psi_i$. Thus none of these methods is expected to yield good reconstructions out of the box.

There is comparatively little work in the literature that addresses these problems. Many papers on parametric hyperbolic problems do use separation of variables \eqref{eq:polyadic-decomposition} and focus on different problems like solving the PDE directly in a reduced basis, stabilization, online/offline decompositions and error estimators, see e.g. \cite{ChenGottliebHesthaven2005,HaasdonkOhlberger2008,HaasdonkOhlberger2008a,NguyenRozzaPatera2009,PetterssonAbbasbIaccarinoEtAl2010,PulchXiu2012,TryoenMaitreErn2012,MishraSchwab2012,DespresPoeetteLucor2013,YanoPateraUrban2014,PetterssonIaccarinoNordstroem2014,PacciariniRozza2014,DahmenPleskenWelper2014,Dahmen2015, AbgrallAmsallem2015,JinXiuZhu2016}. Only more recently some authors directly addressed approximation problems, \cite{ConstantineIaccarino2012,GerbeauLombardi2012,OhlbergerRave2013,JakemanNarayanXiu2013,GerbeauLombardi2014,TaddeiPerottoQuarteroni2015,Welper2015,ReissSchulzeSesterhenn2015,CagniartMadayStamm2016,RimMoeLeVeque2017}. 

This paper is based on the approach in \cite{Welper2015}, where an additional transform of the physical domain is introduced in order to align the jump discontinuities in parameter. Specifically, for interpolation points $\param_n \subset \param$, corresponding snapshots $u(\cdot, \eta)$, $\eta \in \param_n$ and transforms $(x, \mu) \to \phi(\mu, \eta)(x)$, the solution $u(x, \mu)$ is approximated by the \emph{transformed snapshot interpolation} (TSI)
\begin{equation*}
  u(x,\mu) \approx \sum_{\eta \in \param_n} \ell_\eta(\mu) u(\phi(\mu, \eta)(x), \eta),
\end{equation*}
where $\ell_i$ are Lagrange interpolation polynomials with regard to the interpolation points $\param_n$. First note that in case the transform $\phi(\mu, \eta)(x) = x$ is the identity, this is a standard polynomial interpolation of $\mu \to u(\cdot, \mu)$. The purpose of the transform $\phi(\mu, \eta)(x)$ is to align the jumps of the snapshots at parameter $\eta \in \param_n$ with the jumps of the target function function $u(\cdot, \mu)$. In order to keep the article self contained, this transformed interpolation and the computation of the transforms are summarized in Section \ref{sec:tsi}.

In \cite{Welper2015}, it has been demonstrated that this modified interpolation can effectively deal with jump discontinuities arising form many parametric hyperbolic problems. However, the assumption that the jumps can be aligned is too restrictive for many practical purposes. To be more specific, by aligning transforms, we mean that for any pair of parameters $\mu$ and $\eta$, it must be possible to find a transform that maps the jump set of $u(\cdot, \mu)$ to the jump set of $u(\cdot, \eta)$. With slight regularity assumptions on the transform, this implies that for all parameters the jump sets must be homeomorphic. In the following, we therefore loosely refer to problems where the jump sets cannot be aligned as problems with \emph{changing jump set/shock topology}. Prototype examples are the formation or collision of shocks which changes their number and thus also their topology.
 
Thus, we have have to deal with two types of singularities: shock movement and shock topology changes. The former ones typically cause singularities at every parameter value and are therefore global in nature. They can be dealt with by the TSI. In contrast, the latter topology changes are local in parameter space. Indeed, we can partition the parameter space into patches that allow an alignment of the jump sets so that the topology changes are located at the boundaries of these patches. For low to moderate dimensional parameter spaces one can therefore use classical localization techniques to resolve the shock topology changes. One choice that is perused in this article is to apply the TSI on $h$ or $hp$ adaptively refined cells in parameter space.

This strategy has two drawbacks: First, local refinements do not translate well to high dimensional parameter spaces and second their refined cells have typically linear faces that limit the achievable approximation rate of the boundaries of the alignable patches. Given these difficulties, in this paper, we propose an alternative route via a construction akin to a tensor product. This only makes sense if the local refinements are aligned with the coordinate axis, which is automatically taken care of by suitable transforms in the TSI. Although this approach is certainly susceptible to the curse of dimensionality, many methods that do work in high dimensions start from a tensor product construction. Examples include sparse grids \cite{BungartzGriebel2004} and sparse polynomial expansions \cite{CohenDeVoreSchwab2010,CohenDeVoreSchwab2011,CohenDeVore2015}. To what extend such methods can be applied to ``tensorized'' $h$-adaptive TSI in this paper is left for future research.

The method of this paper is not meant to deal with arbitrary parametric hyperbolic problems. Indeed their shock structure and topology changes can be fairly numerous and complicated. Instead, we focus on a limited number of shock topology changes, only. This already has a number of applications, e.g. for steady state problems that often have a small number of shock configurations. E.g. for an airfoil, we have a normal shock in the transonic regime and a bow and tail shock in the supersonic regime. In the current literature, even this simplified problem is mostly unsolved.

In Section \ref{sec:tsi}, we shortly review the TSI, in Section \ref{sec:hp-adaptivity}, we start out with a simple $hp$-type adaption for one parameter dimension, and Section \ref{sec:multiple-param-dim}, we consider tensor product type constructions. Finally, in Section \ref{sec:experiments} we consider some numerical experiments.

\section{Interpolation by Transformed Snapshots}
\label{sec:tsi}

In order to keep this article self-contained, this section provides an overview of the transformed snapshot interpolation, see \cite{Welper2015} for a more details. To this end, assume we have selected some interpolation points $\param_n$ of cardinality $n$ and computed the corresponding snapshots $u(\cdot, \eta)$, $\eta \in \param_n$. This typically involves a PDE solve with fixed parameter $\eta$, which we assume to be sufficiently accurate in the following. Now, for some new parameter $\mu$ that we have not seen yet, we want to calculate $u(\cdot, \mu)$, ideally without spending the cost necessary to solve the parametric PDE. This problem is exemplified in Figure \ref{fig:tsi-motivation}: It shows the $\Omega \times \param$ plane and the dotted lines indicate the $(x,\mu)$ points where we know the function values $u(x, \mu)$, whereas the dashed line indicates an example for a target parameter, where we want to know the solution $u(\cdot, \mu)$. 

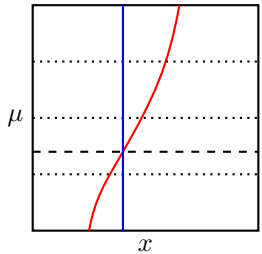
\begin{figure}[htb]

	\centering
	\begin{tikzpicture}[scale=1.5]

	\draw[thick] (0,0) rectangle (2,2);

	\node[below] at (1,0) {$x$};
	\node[left] at (0,1) {$\mu$};

	\draw[thick, dotted] (0, 0.5) -- (2, 0.5);
	\draw[thick, dotted] (0, 1.0) -- (2, 1.0);
	\draw[thick, dotted] (0, 1.5) -- (2, 1.5);
	\draw[thick, dashed] (0, 0.7) -- (2, 0.7);

	\draw[thick, red] (0.5, 0) to[out=80, in=-120] (0.8, 0.7) to[out=-120+180, in=-100] (1.3, 2);
	\draw[thick, blue] (0.8, 0) -- (0.8, 2);

	\end{tikzpicture}

	\caption{Dotted lines: Locations where we know the values $u(x,\mu)$. Dashed line: Locations where we want to approximate $u(x, \mu)$. Red line: Locations of a jump discontinuity. Blue Line: Locations of a jump discontinuity after transformation.}
	\label{fig:tsi-motivation}
\end{figure}

Many numerical schemes are available to solve this problem such as reduced basis methods, empirical interpolation or polynomial chaos expansions. For the sake of simplicity, let us consider the perhaps easiest method which is polynomial interpolation: For every fixed $x$, we can interpolate in the $\mu$ direction, which corresponds to interpolating along vertical lines in the figure with interpolation points on the dotted lines. It will be useful to view this from a different angle. We can consider $u(x, \mu)$ as a function
\begin{align}
u: \param & \to L_1(\Omega), & u_\mu & := u(\cdot, \mu)
\label{eq:banach-space-valued}
\end{align}
so that our interpolation procedure is a polynomial interpolation of Banach-space valued functions. Whatever perspective one takes, for interpolation points $\eta \in \param_n$, and corresponding Lagrange polynomials $\ell_\eta$, the interpolation is given by
\begin{equation}
(\interpol_n u)(x,\mu) := \sum_{\eta \in \param_n}^n \ell_\eta(\mu) u(x, \eta).
\label{eq:interpol}
\end{equation}
This is a reasonable approach in case $u$ is smooth in $\mu$, but in the hyperbolic regime we are explicitly interested in the case where $u$ has jumps. To this end, let us assume that $u(x, \mu)$ has a jump along the red line, in Figure \ref{fig:tsi-motivation}. If we interpolate along vertical lines, there are always $x$-values for which we cross the jump which spoils the accuracy of the interpolation. This problem is not restricted to the perhaps overly simplistic choice of polynomial interpolation. In fact, in \cite{Welper2015,OhlbergerRave2016} it has been shown via a Kolmogorov $n$-with argument that all separation of variable type methods that decompose the solution in the form $\sum_i \psi_i(x) \phi_i(\mu)$ with any choice of $\psi_i$ and $\phi_i$ have similarly low convergence rates. This contains all of the more sophisticated methods mentioned above.

The cause of these difficulties is that no matter what linear combination of the snapshots we choose, the jumps will always be in the wrong places. We therefore introduce a transform in order to align them. That is, if we want to use the snapshot $u(\cdot, \eta)$, $\eta \in \param_n$ to approximate $u(\cdot, \mu)$, we first transform the physical variables $x$ in dependence on the source parameter $\eta$ and target parameter $\mu$ by a transform $\phi(\mu, \eta): \Omega \to \Omega$ leading to the \emph{transformed snapshots}
\begin{equation}
  v_\mu(x, \eta) := v_\mu^\phi(x, \eta) := u(\phi(\mu, \eta)(x), \eta).
  \label{eq:transformed-snapshots}
\end{equation}
We usually omit the superscript $\phi$ when it is known from context. In Figure \ref{fig:tsi-motivation}, this transform moves points $(x, \mu)$ horizontally, including the jump location. Ideally, this aligns the jump locations of the transformed snapshots $v_\mu(x, \eta)$ so that they are in the location of the blue vertical line. Let us assume that we already know such a transform and discuss its calculation below. Then we know the transformed snapshots on the dotted lines, just as the original snapshots. Obviously, for $\eta = \mu$ nothing has to be aligned so that we assume
\begin{equation}
\phi(\mu, \mu)(x) = x,
\label{eq:interpolation-condition}
\end{equation}
which implies that $u(x, \mu) = v_\mu(x, \mu).$ This means that on the dashed line, where we want to reconstruct $u(x, \mu)$, we find that the original snapshot and the transformed snapshots coincide. Thus, instead of interpolating $u(x,\mu)$ along the parameter direction, at the target parameter $\mu$, we can interpolate the transformed snapshot $v_\mu(x, \eta)$ along $\eta$, instead, i.e.
\begin{equation}
u(x,\mu) = v_\mu(x, \mu) \approx (\interpol_n v_\mu)(x, \mu) = \sum_{\eta \in \param_n}^n \ell_\eta(\mu) v_\mu(x, \eta).
\label{eq:tsi}
\end{equation}
In Figure \ref{fig:tsi-motivation}, this is again an interpolation along vertical lines. Since the blue line containing the jump locations of the transformed snapshots is vertical, this interpolation process never encounters the jumps and yields highly accurate approximations.

We still need a practical method to calculate the inner transformations $\phi(\mu, \eta)(x)$. To this end, we choose the transform that minimizes the worst case error
\begin{equation}
\phi = \argmin_{\phi \in \transforms} \, \sup_{\mu \in \param_T} \|u(\cdot, \mu) - (\interpol_n v_\mu^\phi)(\cdot, \mu)\|_{L_1},
\label{eq:opt}
\end{equation}
where $\transforms$ is a set of admissible transforms that satisfies \eqref{eq:interpolation-condition}. This approach is comparable to back propagation learning algorithms for neural networks or greedy methods for reduced basis methods. Ideally, we would use the supremum of all parameters, but since this requires knowledge of $u(\cdot, \mu)$ for all parameters, we confine ourselves to a set of training snapshots $u(\cdot, \mu_t)$ for $\mu_t$ in a training sample $\param_T$. 

For the practical computation of the transformed snapshot interpolation, the optimization problem \eqref{eq:opt} seems forbiddingly complicated. In fact, already for easy problems one has local minima with unacceptable alignment of the jump discontinuities. However, one can greatly alleviate the problem by observing that the transforms $\phi(\mu, \eta)(x)$ allow some additional structure: One can decompose them into local contributions. If we have a chain of parameters $\eta = \mu_0, \dots, \mu_k = \mu$, we can split the transform by
\begin{equation}
  \phi(\mu, \eta)(x) = \phi(\mu_1, \mu_0) \circ \dots \circ \phi(\mu_k, \mu_{k-1})
  \label{eq:transform-decomposition}
\end{equation}
or equivalently 
\begin{equation*}
  \phi(\mu, \eta)^{-1}(x) = \phi(\mu_k, \mu_{k-1})^{-1} \circ \dots \circ \phi(\mu_1, \mu_0)^{-1}
\end{equation*}
which has a more natural ordering of the parameters in case the transform is invertible. One can directly verify that if every local in parameter transform $\phi(\mu_{i+1}, \mu_i)$ aligns the jump discontinuities correctly, than their composition does so as well. For $\mu_{i+1}$ and $\mu_i$ sufficiently close, or rather their jump sets sufficiently close, the transforms $\phi(\mu_{i+1}, \mu_i)$ are perturbations of the identity. Therefore, the identity transform is a good enough initial value for optimizing a local variant of \eqref{eq:opt}, which leads to acceptable alignment for the local problem and thus via \eqref{eq:transform-decomposition} also for the global problem. This construction can be compared to implicit ODE solvers were for each step we have to solve a nonlinear system of equations. They can be solved uniquely if the step-size is small enough, because we have a very localized problem. Indeed, $\phi(\mu, \eta)(x)$ can be understood as a propagator of an ODE in the variable $\eta$ with initial condition given by $x$, see Lemma \ref{lemma:ode} below. In \cite{Welper2015} it is shown that for piecewise continuous one dimensional (with respect to $x$) problems this ``localization'' strategy provably prevents optimizers to be trapped in local minima.

We now define the \emph{transformed snapshot interpolation} (TSI) by
\begin{equation}
\tsi_n u := \interpol_n v_\mu^\phi
\label{eq:tsi-operator}
\end{equation}
with $\phi$ given by \eqref{eq:opt}. This TSI uses the same snapshots as the classical interpolation \eqref{eq:interpol} for the reconstruction with additional training samples for the optimization of the inner transform. In \cite{Welper2015} it has been numerically observed that for one parameter dimension about $n$ training snapshots are sufficient for the optimizer so that an application of $\tsi_n$ needs about $2n$ snapshots. We will later break higher parameter spaces down to this one dimensional case.

Finally, let us summarize the computational steps. Similar to reduced basis methods, the work is split into an offline and online phase.

\paragraph{Offline}
\begin{enumerate}
	\item Compute the snapshots $u(x,\eta)$, $\eta \in \param_n$ by solving the hyperbolic PDE with fixed parameter $\eta$.
	\item For all $\eta \in \param_n$, compute the inner transforms $(x, \mu) \to \phi(\mu, \eta)(x)$ by localized optimization problems of the type \eqref{eq:opt}. Note that typically the transforms are smooth in $x$ and $\mu$ so that that they can be approximated by more classical techniques, unlike $u$ itself.
\end{enumerate}

\paragraph{Online}
\begin{enumerate}
	\item For each new $\mu$, compute an approximation of $u(\cdot, \mu)$ by \eqref{eq:tsi}.
\end{enumerate}

\section{\texorpdfstring{$hp$}{hp}-adaptive TSI}
\label{sec:hp-adaptivity}

Recall from the introduction that the parameter dependence of jumps causes singularities at every parameter $\mu$ and is therefore global, whereas the shock topology changes are typically localized in parameter space. Therefore, in this section, we introduce an $h$ and $hp$-adaptive variant of the TSI of Section \ref{sec:tsi} in order to approximate problems with changing shock topologies. See \cite{Schwab1999,Demkowicz2006} for an overview of $hp$-adaptive methods and \cite{EftangPateraRnquist2010,EftangStamm2012} for some applications to parametric functions..

It is instructive to interpret $u(x,\mu)$ as a Banach space valued function $\mu: \param \to L_1(\Omega)$ defined by $\mu \to u(\cdot, \mu)$ as in \eqref{eq:banach-space-valued}. Then, we can consider an $hp$-adaptive polynomial interpolation with respect to the parameter variable of an $L_1(\Omega)$ valued function on a finite element mesh. It causes no further difficulty to replace the cell-wise polynomial interpolation by a TSI. Indeed, let us assume that we have a partition $\grid$ of the parameter domain $\param$, such that
\begin{align*}
\param & = \bigcup_{T \in \grid} T, & \lambda(T \cap S) & = 0 \text{ for } T \ne S \in \grid,
\end{align*}
where $\lambda(T)$ denotes the Lebesgue measure of $T$. Then, on each cell $T \in \grid$, we can approximate $u$ by the simple interpolation $\interpol_{n_T} u$ defined in \eqref{eq:interpol} with degree $n_T$. For example, in Figure \ref{fig:shock-location-change}, the parameter domain is subdivided into intervals $T = (\mu_i, \mu_{i+1})$, indicated by the dotted lines so that in each horizontal strip between to dotted lines, we perform a polynomial interpolation in $\mu$ direction. Since the jump locations move in each of these strips, this method yields poor results. However, instead of a simple polynomial interpolation, we can also use the TSI $\tsi_{n_T} u$ of degree $n_T$ on each cell $T$, that is each strip in the figure. For all but the strip $(\mu_1, \mu_2)$ the number of jumps does not change, so that TSI performs well. We can then subdivide the remaining cell $(\mu_1, \mu_2)$ in order to increase the accuracy where TSI is insufficient. Thus, as in the finite element case, we have the following two possible refinements to increase the accuracy.

\begin{description}
	\item[$h$-refinement] Subdivide an cell into finitely many sub-cells.
	\item[$p$-refinement] On a cell increase the degree $n_T$ of the TSI.
\end{description}

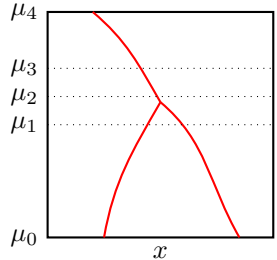
\begin{figure}[htb]

    \centering
    \begin{tikzpicture}[scale=1.5]

    \draw[thick] (0,0) rectangle (2,2);

    \node[below] at (1,0) {$x$};

    \node[left] at (0,0) {$\mu_0$};
    \node[left] at (0,1) {$\mu_1$};
    \node[left] at (0,1.250) {$\mu_2$};
    \node[left] at (0,1.5) {$\mu_3$};
    \node[left] at (0,2) {$\mu_4$};

    \draw[thick, red] (0.5,0) to[out=80, in=-120] (1, 1.2);
    \draw[thick, red] (1.7,0) to[out=120, in=-40] (1, 1.2);
    \draw[thick, red] (1,1.2) to[out=120, in=-40] (0.4, 2);

    \draw[dotted] (0,1) -- (2,1);
    \draw[dotted] (0,1.5) -- (2,1.5);
    \draw[dotted] (0,1.25) -- (2,1.25);

    \end{tikzpicture}

    \caption{Locations of the $x$ and $\mu$ components of the jump discontinuity of a not further specified piecewise smooth function $u(x,\mu)$.}
    \label{fig:shock-location-change}
\end{figure}

As in the finite element case, we need a $h$ or $hp$-refinement strategy. To this end note that on each cell T with TSI degree $n_T$, for the optimization of the inner transforms, in $\eqref{eq:opt}$ we calculate the training errors, denoted by $\epsilon^{n_T}_T$ in the following. In case of $h$ refinement this allows us to use a standard refinement criteria, such as iteratively refining the cell with largest error. Also, many $hp$-refinement strategies solemnly rely on finite element error indicators and their replacement with these training errors $\epsilon_T^{n_T}$ are appropriate candidates for a refinement strategy, see \cite{BabuskaGuo1992,Schwab1999,Demkowicz2006,MitchellMcClain2014} for an overview.

For example, the $hp$-refinement strategy proposed in \cite{SueliHoustonSchwab2000,MitchellMcClain2014}, can be adapted to our problem as follows: First, we search for the cell $T \in \grid$ for which the error indicator $\epsilon_T^{n_T}$ is largest, where $n_T$ is the current TSI degree on $T$. Then, calculate $m_T$ such that
\[
\frac{\epsilon_T^{n_T-1}}{\epsilon_T^{n_T-2}} \approx \left( \frac{n_T-1}{n_T-2} \right)^{-(m_T-1)}
\]
We $p$-refine if $n_T \le m_T-1$ and $h$-refine else.

For later reference, let us denote the outcome of the proposed $hp$-adaptive TSI variant of this section with a total number of $n$ snapshots by
\begin{equation}
\tsihp_n u.
\end{equation}

\subsection{Convergence Rates}
\label{sec:error-hp-1d}

In this section, we consider a basic error estimate of the $hp$-adaptive TSI in one parameter dimension. Higher parameter dimensions will be considered in Section \ref{sec:multiple-param-dim} below. The main goal of the error analysis is to show that in principle it is possible to achieve exponential convergence rates despite of jump discontinuities and changing topologies of the jump sets. The arguments follow the standard proofs \cite{Schwab1999}, however for interpolation in $L_\infty$ instead of the more common approximation in Hilbert spaces.

In order to keep the exposition simple, we assume a graded partition $\grid$ of the one dimensional parameter domain $\param$ into dyadic cells, so that there is a uniformly bounded number of cells $T \in \grid$ on each level, all of which are intervals of length $2^{-\ell}$ and neighboring cells differ at most by one level. In addition, all cells have a level higher that some $\ell_0$ specified below and there is one single shock topology change at $\bmu$, contained in the most refined cell of level $L$. Since we expect the TSI to be unsuccessful in the presence of the shock topology change at $\bmu$, we use use a piecewise constant approximation 
\[
  u(x,\mu) \approx u(x,\mu_T)
\]
for some $\mu_T \in T$ for all cells $T$ of level $\ell \ge L-1$, where we have included the second most refined level as a safety margin. On all other cells, we use Chebyshev nodes for the interpolation.

Similar to standard $hp$-adaptive theory \cite{Schwab1999}, we assume that for all $\mu \in \param$
\begin{equation}
  \int_\Omega \sup_{\eta \in \param} |(\bmu - \eta)^\beta \partial_\eta^p v_\mu(x,\eta) | \, dx\le p! \, C^p,
  \label{eq:hp-derivative-bounds}
\end{equation}
for some $0 \le \beta \le 1$ and for all $p \ge 1$. Note that by construction, unlike $\eta \to u(x,\eta)$, the transformed snapshots $\eta \to v_\mu(x, \eta)$, defined in \eqref{eq:transformed-snapshots}, have no jumps in $\eta$, except possibly at $\bmu$, which is taken care of by the factor $(\bmu - \eta)^\beta$.

\begin{proposition}
\label{prop:error-hp-1d}

Assume the derivatives of the transformed snapshots are bounded by \eqref{eq:hp-derivative-bounds} and that there is a Lipschitz constant $C_L$ such that 
\begin{align*}
  \|u(\cdot, \mu) - u(\cdot, \eta) \|_{L_1(\Omega)} & \le C_L |\mu - \eta|, & \mu, \eta \in \param.
\end{align*}
Let the minimal level $\ell_0$ and the degree $n_T$ on cell $T$ on level $\ell$ satisfy
\begin{align*}
  \alpha (\ell_0 +1 ) & \ge \log C, &
  n_T & = \left\lceil \frac{bL}{\ell+1} \right\rceil
\end{align*}
for some $0 \le \alpha < 1$ and $b > \beta$. Then, there are constants $c_0, c_1 \ge 0$, independent of $n$, $\beta$, $C_L$ and $C$ such that
\[
  \sup_{\mu \in \param} \|u(\cdot, \mu) - (\tsihp_n u) (\cdot,\mu)\|_{L_1(\Omega)} \le c_0 2^{-c_1 \sqrt{n}},
\]
where $n$ is the total number of snapshots.

\end{proposition}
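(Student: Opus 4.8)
\section*{Proof proposal}

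The plan is to reduce the global error to a cell-by-cell estimate and then to balance the geometric ($h$-)refinement toward $\bmu$ against the level-dependent degree $n_T$, in the spirit of the standard $hp$ estimates of \cite{Schwab1999}, but with the interpolation error measured in $L_1(\Omega)$ rather than in a Hilbert space. Fix $\mu \in \param$; it lies in exactly one cell $T \in \grid$, and since $\tsihp_n u$ is built cell-wise, $\|u(\cdot,\mu) - (\tsihp_n u)(\cdot,\mu)\|_{L_1(\Omega)}$ equals the error of the scheme on that single cell $T$. I would split the cells into the \emph{singular} cells of level $\ell \ge L-1$ (which carry the topology change at $\bmu$ and use the piecewise constant rule) and the \emph{smooth} cells of level $\ell \le L-2$ (which use Chebyshev TSI), bound the error of each type by $c\,2^{-c' L}$, and finally convert the level bound $L$ into the snapshot count $n$.

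On a singular cell $T$ of level $\ell \ge L-1$ the reconstruction is $u(\cdot,\mu_T)$, so the Lipschitz hypothesis gives directly $\|u(\cdot,\mu) - u(\cdot,\mu_T)\|_{L_1(\Omega)} \le C_L |\mu - \mu_T| \le C_L\,2^{-(L-1)}$, because $\lambda(T) = 2^{-\ell} \le 2^{-(L-1)}$. This is the easy case. For a smooth cell of level $\ell \le L-2$, the interpolation condition \eqref{eq:interpolation-condition} turns the TSI error into the $\eta$-interpolation error of the transformed snapshot: by \eqref{eq:tsi} one has $u(x,\mu) - (\tsi_{n_T} u)(x,\mu) = v_\mu(x,\mu) - (\interpol_{n_T} v_\mu)(x,\mu)$. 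Since the interpolation in \eqref{eq:interpol} is pointwise in $x$, I would pull the $L_1(\Omega)$-norm through the Chebyshev remainder: on the interval $T$ of length $h = 2^{-\ell}$ the degree-$n_T$ Chebyshev error is bounded by $\frac{(h/2)^{n_T+1}}{2^{n_T}(n_T+1)!}\int_\Omega \sup_{\eta \in T}|\partial_\eta^{n_T+1} v_\mu(x,\eta)|\,dx$. The graded mesh places $T$ at distance $\gtrsim 2^{-\ell}$ from $\bmu$, so $\sup_{\eta \in T}|\bmu-\eta|^{-\beta} \lesssim 2^{\ell \beta}$, and \eqref{eq:hp-derivative-bounds} then bounds the integral by $\lesssim 2^{\ell\beta}(n_T+1)!\,C^{n_T+1}$. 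The factorials cancel, leaving, up to constants, the estimate $2^{\ell\beta - n_T}\bigl(C\,2^{-(\ell+1)}\bigr)^{n_T+1}$.

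The remaining work is the $hp$-balancing of this last expression. Using $\ell \ge \ell_0$ together with $\alpha(\ell_0+1) \ge \log C$ yields a bound of the form $C\,2^{-(\ell+1)} \le 2^{-(1-\alpha)(\ell+1)}$, while $n_T \ge bL/(\ell+1)$ gives $(\ell+1)(n_T+1) \ge bL$. Choosing $\alpha$ small enough that $\delta := (1-\alpha)b - \beta > 0$ (possible since $b > \beta$) and using $\ell \le L$, the exponent collapses to $\le -\delta L$, so every smooth cell satisfies an error bound $\le c\,2^{-\delta L}$ \emph{uniformly in} $\ell$. Combining with the singular case gives $\sup_{\mu \in \param}\|u(\cdot,\mu) - (\tsihp_n u)(\cdot,\mu)\|_{L_1(\Omega)} \le c\,2^{-c' L}$ with $c' = \min(\delta,1)$. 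To finish, I would count snapshots: with a uniformly bounded number of cells per level, $n \lesssim \sum_{\ell=\ell_0}^{L}\lceil bL/(\ell+1)\rceil \lesssim L\ln L \le L^2$, hence $L \gtrsim \sqrt{n}$, and the bound becomes $c_0\,2^{-c_1\sqrt{n}}$.

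I expect the main obstacle to be exactly the third paragraph: making the weighted derivative bound \eqref{eq:hp-derivative-bounds}, the geometric distance factor $2^{\ell\beta}$, and the Chebyshev factorial combine with the correct signs, and in particular verifying that the degree choice $n_T = \lceil bL/(\ell+1)\rceil$ renders the exponent uniformly negative across the whole range $\ell_0 \le \ell \le L-2$ — which is where the interplay of $\alpha$, $b$ and $\beta$ is essential, and where the condition $b > \beta$ is used via the choice of $\alpha$. A secondary subtlety is that folding $C_L$ and $C$ into the constants $c_0, c_1$ relies on $L \sim \sqrt{n}$ being large, i.e. the estimate is asymptotic in the number of snapshots.
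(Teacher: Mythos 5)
Your overall architecture matches the paper's proof: a cell-wise reduction, the Lipschitz bound on the finest levels, the Chebyshev remainder combined with the weighted derivative bound \eqref{eq:hp-derivative-bounds} and the distance estimate $\operatorname{dist}(I_\ell,\bmu)\gtrsim 2^{-\ell}$ on the smooth cells (this is exactly the content of Lemmas \ref{lemma:level-dist} and \ref{lemma:cell-interpolation-error}), and the final count $n\lesssim L\log L$. The gap is in the balancing step, precisely where you predicted trouble. Your uniform-in-$\ell$ estimate replaces $\ell\beta$ by $L\beta$ \emph{and} $C2^{-(\ell+1)}$ by $2^{-(1-\alpha)(\ell+1)}$ simultaneously, which produces the exponent $-\bigl((1-\alpha)b-\beta\bigr)L$ and therefore needs $(1-\alpha)b>\beta$. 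You cannot rescue this by ``choosing $\alpha$ small enough'': $\alpha$ is a datum of the proposition, constrained from below by $\alpha\ge\log C/(\ell_0+1)$, and the statement only assumes $\alpha<1$ and $b>\beta$. In the regime $\beta<b\le\beta/(1-\alpha)$ your exponent is nonnegative and the argument yields no decay.

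The paper closes this by observing that the two loss terms are never simultaneously at their worst. Writing the relevant part of the exponent as $f(\ell)=1+(\ell+1)\beta+(\log C)\bigl(2+\tfrac{bL}{\ell+1}\bigr)$, this function of $\ell$ is convex (one interior minimum), hence maximized at an endpoint of $[\ell_0,L-2]$. At $\ell=\ell_0$ the term $(\ell+1)\beta$ is an $L$-independent constant while $\tfrac{\log C}{\ell_0+1}bL\le\alpha bL$, giving decay $2^{-(1-\alpha)bL}$; at $\ell=L-2$ the term $\tfrac{\log C}{\ell+1}bL$ is bounded by the constant $2b\log C$ while $(\ell+1)\beta\le(L-1)\beta$, giving decay $2^{-(b-\beta)L}$. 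Both rates are positive under exactly the stated hypotheses $\alpha<1$ and $b>\beta$. Everything else in your proposal survives unchanged once this endpoint argument replaces your uniform bound.
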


Note that the Lipschitz continuity is only required in the $L_1$-norm and not point-wise which would be critical in the presence of jumps. In the following, we use the notation $I_\ell$ as opposed to $T$ if we want to stress that the cell is an interval on level $\ell$ and $p$ for the degree on a cell. Before we proof the proposition, we need some lemmas. The first one is used to establish a lower bound for the term $(\bmu - \eta)^\beta$ in the smoothness assumption \eqref{eq:hp-derivative-bounds}.

\begin{lemma}
  \label{lemma:level-dist}
  Let $I_\ell$ be a cell on level $\ell < L$. Then, we have
  \[
    \operatorname{dist}(I_\ell, \bmu) \ge 2^{-\ell} \left( 1- 2^{-(L-\ell) + 1} \right).
  \]
\end{lemma}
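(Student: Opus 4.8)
The plan is to exploit the grading of the partition: since $\bmu$ lies in the finest cell, of level $L$, while $I_\ell$ has the coarser level $\ell < L$, the two cells are disjoint and must be separated by a chain of cells whose levels interpolate between $\ell$ and $L$. The distance from $I_\ell$ to $\bmu$ will then be bounded below by the combined length of this separating chain.

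First I would fix, without loss of generality, that $\bmu$ lies to the right of $I_\ell$, and list the cells $I_\ell = T_0, T_1, \dots, T_m$ obtained by repeatedly passing to the right neighbour, with $T_m$ the level-$L$ cell containing $\bmu$. Writing $\ell_j$ for the level of $T_j$, the grading hypothesis (neighbouring cells differ by at most one level) gives $|\ell_{j+1} - \ell_j| \le 1$, while $\ell_0 = \ell$ and $\ell_m = L$.

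The key step is a discrete intermediate-value observation: because the integer sequence $(\ell_j)$ starts at $\ell$, ends at $L$, and changes by at most one at each step, it must attain every intermediate value, so for each $k \in \{\ell+1, \dots, L-1\}$ there is (at least) one index $0 < j < m$ with $\ell_j = k$. These indices are distinct for distinct $k$, and each corresponding cell lies strictly between $I_\ell$ and $\bmu$. Since $\bmu$ lies to the right of all of them, the distance from $I_\ell$ to $\bmu$ is bounded below by the total length $\sum_{j=1}^{m-1} \lambda(T_j)$ of the separating cells, which in turn is at least $\sum_{k=\ell+1}^{L-1} 2^{-k}$ because the chain contains a cell of each level $k \in \{\ell+1,\dots,L-1\}$ (of length $2^{-k}$). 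Summing this geometric series yields $2^{-\ell} - 2^{-(L-1)} = 2^{-\ell}\bigl(1 - 2^{-(L-\ell)+1}\bigr)$, exactly the claimed bound.

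I do not expect any genuine obstacle here; the only points requiring a little care are the bookkeeping that the selected separating cells are genuinely distinct, so that their lengths add without double counting, and the degenerate case $\ell = L-1$, where the chain is empty and both sides of the inequality vanish. It is worth noting that only a lower bound on the length of the chain is needed, so the argument is robust to the chain containing extra cells at levels already accounted for.
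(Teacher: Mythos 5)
Your proof is correct and follows essentially the same route as the paper: both arguments use the grading property to guarantee a cell of every intermediate level $k \in \{\ell+1,\dots,L-1\}$ lying between $I_\ell$ and $\bmu$, sum their lengths $2^{-k}$, and evaluate the geometric series. The paper packages this as an induction on the level (one level-$(\ell+1)$ cell separates $I_\ell$ from $\bmu$, then recurse), whereas you extract all the intermediate cells at once via a discrete intermediate-value argument along the chain of neighbours; the content is identical, and your explicit handling of the degenerate case $\ell = L-1$ matches the paper's.
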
 

\begin{proof}

For $\ell = L-1$, we use the trivial estimate $\operatorname{dist}(I_{L-1}, \bmu) \ge 0$. Let us consider $\ell < L-1$. By the grading property, for any cell $I_\ell$ on level $\ell$ there must be a cell $I_{\ell+1}$ of level $\ell+1$ between $I_\ell$ and $\bmu$. Since this cell has length $2^{\ell+1}$, we obtain
\[
  \operatorname{dist}(I_\ell, \bmu) \ge 2^{-(\ell+1)} + \operatorname{dist}(I_{\ell+1}, \bmu).
\]
By induction this yields
\[
  \operatorname{dist}(I_{\ell}, \bmu) \ge \sum_{k=\ell+1}^{L-1} 2^{-k}.
\]
Calculating the geometric sum proves the lemma. 
  
\end{proof}

\begin{lemma}
\label{lemma:cell-interpolation-error}

  Assume that the bounds \eqref{eq:hp-derivative-bounds} of the derivatives are satisfied. Then, on each cell $I_\ell$ of level $\ell \le L-2$, we have the error bound
  \[
  \|u(\cdot, \mu) - (\tsi_p u) (\cdot,\mu)\|_{L_1(\Omega)} \le 2^{1 + (\ell+1) \beta - (\ell+1)p} C^{p+1}
  \]
  for all $\mu \in I_\ell$.
\end{lemma}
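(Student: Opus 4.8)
The plan is to reduce the transformed-snapshot interpolation error on the cell $I_\ell$ to a classical one-dimensional polynomial interpolation error for the scalar map $\eta \mapsto v_\mu(x,\eta)$ at fixed $x$, and then to control the resulting derivative term using the weighted bound \eqref{eq:hp-derivative-bounds} together with the distance estimate of Lemma \ref{lemma:level-dist}. First I would rewrite the integrand: by the interpolation condition \eqref{eq:interpolation-condition} we have $u(x,\mu) = v_\mu(x,\mu)$, and by the definition \eqref{eq:tsi-operator} of the TSI, $(\tsi_p u)(x,\mu) = (\interpol_p v_\mu)(x,\mu)$. Hence for each fixed $x \in \Omega$ the integrand is exactly the remainder of the degree $p$ Chebyshev interpolation (in $\eta$, over $I_\ell$, using $p$ nodes) of $\eta \mapsto v_\mu(x,\eta)$.

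The key structural point that legitimizes the remainder formula is smoothness in $\eta$: as noted after \eqref{eq:hp-derivative-bounds}, $\eta \mapsto v_\mu(x,\eta)$ has no jumps on $\param$ except possibly at $\bmu$, and since $\ell \le L-2 < L$, Lemma \ref{lemma:level-dist} gives $\operatorname{dist}(I_\ell,\bmu) > 0$, so $\bmu \notin I_\ell$ and the function is smooth on $I_\ell$. The classical interpolation remainder then yields, for each $x$, a point $\xi_x \in I_\ell$, and bounding $|\partial_\eta^{p} v_\mu(x,\xi_x)|$ by its supremum over $I_\ell$ (which sidesteps any measurability concern about $\xi_x$ before integrating in $x$) gives
\[
|v_\mu(x,\mu) - (\interpol_p v_\mu)(x,\mu)| \le \frac{\sup_{\eta \in I_\ell}|\partial_\eta^{p} v_\mu(x,\eta)|}{p!}\,\|\omega\|_{L_\infty(I_\ell)},
\]
where $\omega$ is the nodal polynomial of the $p$ Chebyshev nodes.

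Next I would supply the two quantitative bounds. For the nodal polynomial, the standard Chebyshev estimate on an interval of length $2^{-\ell}$ reads $\|\omega\|_{L_\infty(I_\ell)} \le (2^{-\ell}/2)^{p} 2^{1-p} \le 2^{-(\ell+1)p}$, since $2^{1-p}\le 1$ for $p \ge 1$; this contributes the factor $2^{-(\ell+1)p}$. For the derivative term, I would convert the unweighted supremum into the weighted one of \eqref{eq:hp-derivative-bounds}: because $\ell \le L-2$ forces $1 - 2^{-(L-\ell)+1} \ge 1/2$ in Lemma \ref{lemma:level-dist}, we obtain $\operatorname{dist}(I_\ell,\bmu) \ge 2^{-(\ell+1)}$, hence $|\bmu - \eta|^{-\beta} \le 2^{(\ell+1)\beta}$ for all $\eta \in I_\ell$ (using $\beta \ge 0$). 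Multiplying and dividing by $(\bmu-\eta)^\beta$, enlarging the supremum from $I_\ell$ to $\param$, integrating over $\Omega$, and invoking \eqref{eq:hp-derivative-bounds} at order $p$ gives
\[
\int_\Omega \sup_{\eta \in I_\ell}|\partial_\eta^{p}v_\mu(x,\eta)|\,dx \le 2^{(\ell+1)\beta}\,p!\,C^{p}.
\]

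Finally I would assemble: integrating the pointwise remainder over $\Omega$ and inserting the two bounds, the factors $p!$ cancel and I obtain $2^{(\ell+1)\beta - (\ell+1)p}C^{p}$, which is dominated by the claimed $2^{1 + (\ell+1)\beta - (\ell+1)p}C^{p+1}$ — the stated constants leave exactly the needed room, the extra $2^1$ and the passage $C^p \le C^{p+1}$ (for $C \ge 1$, consistent with $\alpha(\ell_0+1)\ge \log C$) absorbing the dropped Chebyshev factor $2^{1-p}$. I do not expect a genuine obstacle, as this is a standard interpolation estimate; the points needing the most care are the justification that $\eta \mapsto v_\mu(x,\eta)$ is smooth on $I_\ell$ (so the remainder applies at all despite the shock-topology change at $\bmu$) and the bookkeeping that turns the weighted derivative bound \eqref{eq:hp-derivative-bounds} into the unweighted quantity via Lemma \ref{lemma:level-dist}.
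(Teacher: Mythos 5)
Your argument is correct and is essentially the paper's own proof: both reduce the cell error to the Chebyshev interpolation remainder for $\eta \mapsto v_\mu(x,\eta)$ and use Lemma \ref{lemma:level-dist} to convert the weighted bound \eqref{eq:hp-derivative-bounds} into an unweighted derivative bound via $|\eta-\bmu|\ge \tfrac12 2^{-\ell}$. The only divergence is a node-count convention: you take $p$ Chebyshev nodes and the $p$-th derivative, landing on $C^p$ and needing $C\ge 1/2$ to absorb it into the stated $C^{p+1}$, whereas the paper's remainder uses $p+1$ nodes and the $(p+1)$-st derivative, which produces the factor $C^{p+1}$ (and an even smaller power of $2$) directly without any assumption on the size of $C$.
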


\begin{proof}

First note, that on a cell $I_\ell$ on level $\ell$ with $L-\ell \ge 2$ we have $|I_\ell| = 2^{-\ell}$ and by Lemma \ref{lemma:level-dist} we have $|\eta - \bmu| > \frac{1}{2} 2^{-\ell}$. With the regularity assumption \eqref{eq:hp-derivative-bounds}, this implies that for every $\mu, \eta \in I_\ell$, we have
\begin{equation*}
  \int_\Omega \sup_{\eta \in I_\ell} |\partial_\eta^p v_\mu(x,\eta) | \, dx \le 2^{(\ell+1) \beta} \int_\Omega \sup_{\eta \in I_\ell} |(\bmu - \eta)^\beta \partial_\eta^p v_\mu(x,\eta) | \, dx \le 2^{(\ell+1) \beta} p! \, C^p.
\end{equation*}
From standard estimates for Chebyshev interpolation, we have
\[
v_\mu(x, \mu) - (\interpol_p v_\mu)(x,\eta) = \frac{1}{2^p (p+1)!} \left(\frac{|I_\ell|}{2} \right)^{p+1} \partial_\eta^{p+1} v_\mu(x,\xi)
\]
for some $\xi \in I_l$. This implies that
\begin{align*}
  \|u(\cdot, \mu) - (\tsi_p u) (\cdot,\mu)\|_{L_1(\Omega)} & = \int_\Omega | v_\mu(x, \mu) - (\interpol_p v_\mu)(x,\eta)| \, dx \\
  & \le \frac{1}{2^p (p+1)!}\left(\frac{|I_\ell|}{2} \right)^{p+1} \int_\Omega \sup_{\eta \in I_\ell} |\partial_\eta^{p+1} v_\mu(x,\eta) | \, dx \\
  & \le 2^{-p + (\ell+1) \beta - (\ell+1)(p+1)} C^{p+1}.
\end{align*}

\end{proof}

Now we are ready to prove Proposition \ref{prop:error-hp-1d}.

\begin{proof}[Proof of Proposition \ref{prop:error-hp-1d}]

Let
\[
  \sigma_\ell := \sup_{\mu \in I_\ell} \|u(\cdot, \mu) - (\tsi_p u) (\cdot,\mu)\|_{L_1(I_\ell)}
\]
be the maximal error on a cell $I_\ell$ on level $\ell$. We first consider the error for $\ell_0 \le \ell \le L-2$. By Lemma \ref{lemma:cell-interpolation-error}, we have
\[
  \sigma_l \le 2^{1 + (\ell+1) \beta - (\ell+1)p + (\log C)(p+1)}.
\]
By our choice of the degrees $p$, we have 
\[
\frac{bL}{\ell+1} \le p \le 1+ \frac{bL}{\ell+1},
\]
which yields
\[
  \sigma_l \le 2^{1 + (\ell+1) \beta - bL + (\log C)\left( 2 + \frac{bL}{\ell+1} \right)}.
\]
The function $\real \ni \ell \to 1 + (\ell+1) \beta + (\log C)\left( 2+ \frac{bL}{\ell+1} \right)$ has one local minimum and attains its maximum on the boundary. For $\ell = \ell_0$, it is $1 + (\ell_0+1) \beta + 2\log C + \frac{\log C}{\ell_0+1} bL \le 1+(\ell_0+1) \beta + 2 \log C + \alpha b L$ and for $\ell = L-2$, it is bounded by $1+(L-1)\beta + 2(1+b) \log C$, so that we obtain
\[
  \sigma_l \le \max\{C^2 2^{1+(\ell_0+1) \beta} , 2^{1-\beta} C^{2(1+b)}\} \max\{ 2^{-(1-\alpha) b L}, 2^{-(b-\beta) L}\}.
\]
For $\ell \ge L-2$, from the Lipschitz continuity of $u(\cdot, \mu)$ in the $L_1(\Omega)$ norm, we conclude that
\[
  \sigma_\ell \le C_L |\mu - \mu_{I_\ell}| \le C_L 2^{-\ell} \le 2 C_L 2^{-L}
\]
In conclusion, on every cell, the error is bounded by $c_0 2^{-c_1 L}$ for suitable constants $c_0$ and $c_1$. Finally, from the assumptions, one easily verifies that $n \le \gamma^2 L \log L \le \gamma^2 L^2$ for some constant $\gamma \ge 0$ and thus $L \gtrsim \gamma \sqrt{n}$, which concludes the proof.
\end{proof}

\subsection{Singular Transforms?}
\label{sec:singular}

Even if we can align the discontinuities away from the parameter locations of the shock topology changes, the necessary transforms typically have singularities themselves. This can be easily seen in Figure \ref{fig:transform-singularity} for two colliding shocks. Say we want to evaluate the TSI at parameter $\bmu$ of the shock topology change in the figure. To this end, we use evaluations of the snapshots $u(\cdot, \eta)$, $\eta \in \param_n$ at the positions $\phi(\bmu, \eta)(x)$. Two such transforms are shown in the figure. Clearly, if we start a little bit to the left of the shocks or a little bit to the right of the shocks the transforms $\phi(\bmu, \eta)(x)$ must have vastly different values, i.e. the transform $x \to \phi(\bmu, \eta)(x)$ has a jump.

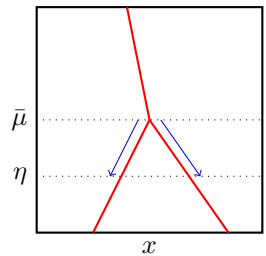
\begin{figure}[htb]

    \centering
    \begin{tikzpicture}[scale=1.5]

    \draw[thick] (0,0) rectangle (2,2);

    \node[below] at (1,0) {$x$};
    
    \draw[dotted] (0,1) node[left] {$\bmu$} -- (2,1);
    \draw[dotted] (0,0.5) node[left] {$\eta$} -- (2,0.5);
  
    \draw[thick, red] (0.5,0) -- (1, 1);
    \draw[thick, red] (1.7,0) -- (1, 1);
    \draw[thick, red] (1,1) -- (0.8, 2);

    \newcommand{\figH}{0.1}
    \draw[->, blue] (1-\figH, 1) -- (0.75 -\figH, 0.5);
    \draw[->, blue] (1+\figH, 1) -- (1.35+\figH, 0.5);

    \end{tikzpicture}

    \caption{Red lines: location of the jumps in the $x-\mu$-plane. Blue arrow: transform $\phi(\mu, \eta)(x)$. The direction at hight $\mu$ is given by $\Phi(\mu, x)$.}
    \label{fig:transform-singularity}
\end{figure}

Nonetheless, in this section we argue that this does not necessarily induce high gradients or jumps in the $hp$-adaptive TSI's transforms. This is important for two reasons: First, we have to find practical approximations of $x \to \phi(\mu, \eta)(x)$, which becomes complicated if this function has jumps itself. Second, due to this approximation, we also have to consider the stability of the $hp$-adaptive TSI with respect to perturbations in the transforms, which also deteriorates with their regularity.

We still consider one parameter dimension and defer higher dimensions to Section \ref{sec:multiple-param-dim}, below. It will be useful to interpret the transforms as the solution of an ODE as established by the following lemma.

\begin{lemma}
\label{lemma:ode}
  Assume that the transform $\phi(\mu, \eta)$ satisfies the decomposition \eqref{eq:transform-decomposition}. Then, it is the solution of the ODE
  \begin{align}
    \frac{d}{d \eta} \phi(\mu, \eta)(x) & = \Phi(\eta, x) & \phi(\mu, \mu)(x) & = x
    \label{eq:transform-ode}
  \end{align}
  with $\Phi(\eta, x) := \frac{\partial}{\partial \eta} \phi(\mu, \eta)(x)$.
\end{lemma}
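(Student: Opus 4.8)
The plan is to read the decomposition \eqref{eq:transform-decomposition} as a two-parameter flow (cocycle) identity and then recover the ODE by differentiating that identity in the second argument. First I would dispose of the initial condition: the equation $\phi(\mu,\mu)(x) = x$ in \eqref{eq:transform-ode} is literally \eqref{eq:interpolation-condition}, so it holds by assumption and requires no argument.

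For the ODE itself, the key algebraic step is to extract from \eqref{eq:transform-decomposition} the semigroup identity $\phi(\mu,\eta) = \phi(\nu,\eta)\circ\phi(\mu,\nu)$ for any intermediate parameter $\nu$. This follows by splitting the chain $\eta = \mu_0,\dots,\mu_k = \mu$ at an index with $\mu_j = \nu$ and grouping the factors: the leftmost block equals $\phi(\nu,\eta)$ and the rightmost block equals $\phi(\mu,\nu)$, each again by \eqref{eq:transform-decomposition}. Replacing the source parameter $\eta$ by $\eta+h$ and choosing $\nu = \eta$ then gives $\phi(\mu,\eta+h) = \phi(\eta,\eta+h)\circ\phi(\mu,\eta)$; that is, advancing the flow from level $\eta$ to level $\eta+h$ is realized by the purely \emph{local} transform $\phi(\eta,\eta+h)$ applied at the current position $\phi(\mu,\eta)(x)$.

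With this in hand I would form the difference quotient $h^{-1}[\phi(\mu,\eta+h)(x) - \phi(\mu,\eta)(x)] = h^{-1}[\phi(\eta,\eta+h)(y) - y]$ with $y := \phi(\mu,\eta)(x)$, using $\phi(\eta,\eta) = \id$, and let $h \to 0$. The limit is $\partial_s \phi(\eta,s)(y)\big|_{s=\eta}$, which I would adopt as the definition of the velocity field $\Phi(\eta,\cdot)$, giving \eqref{eq:transform-ode}. The substantive content of the lemma is precisely that this generator depends only on the current level $\eta$ and the current physical position, not on the fixed endpoint $\mu$: the increment is carried entirely by the local transform $\phi(\eta,\eta+h)$, in which $\mu$ never appears. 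This is what upgrades $\eta \mapsto \phi(\mu,\eta)(x)$ from a family of curves into the solution of a genuine initial value problem.

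The main obstacle I anticipate is regularity and the continuous reading of the decomposition, not the algebra. One must assume $\phi$ is differentiable in its second argument so that the difference quotient converges and $\Phi$ exists; and since \eqref{eq:transform-decomposition} is stated for discrete chains, passing to the infinitesimal generator requires either assuming the composition identity for all intermediate parameters or interpreting \eqref{eq:transform-decomposition} as a refinement limit. I would also flag the mild notational point that on the right-hand side of \eqref{eq:transform-ode} the argument ``$x$'' is to be read as the running state $\phi(\mu,\eta)(x)$; once $\Phi$ is seen to be independent of $\mu$, this is the standard flow/vector-field convention and introduces no ambiguity.
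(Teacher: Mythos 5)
Your proof is correct and follows essentially the same route as the paper's: both extract the two-parameter composition identity $\phi(\mu,\eta) = \phi(\xi,\eta)\circ\phi(\mu,\xi)$ from \eqref{eq:transform-decomposition}, specialize the intermediate parameter to $\eta$ so that $\phi(\mu,\eta+h)(x) = \phi(\eta,\eta+h)(y)$ with $y := \phi(\mu,\eta)(x)$, and pass to the limit in the difference quotient using $\phi(\eta,\eta)(y) = y$. Your added remarks on differentiability in the second argument and on the generator being independent of $\mu$ are sound clarifications of points the paper leaves implicit.
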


\begin{proof}

By \eqref{eq:transform-decomposition} we have
\[
  \phi(\mu, \eta) = \phi(\xi, \eta) \circ \phi(\mu, \xi)
\]
for all $\xi \in \param$. With $y := \phi(\mu, \eta)(x)$ this implies that
\[
  \phi(\mu, \eta+h)(x) = \phi(\eta, \eta+h) \circ \phi(\mu, \eta)(x) = \phi(\eta, \eta+h)(y).
\]
Since by \eqref{eq:interpolation-condition} we have $\phi(\mu, \eta)(x) = y = \phi(\eta, \eta)(y)$ this implies that
\begin{multline*}
  \lim_{h \to 0} \frac{1}{h} \Big[\phi(\mu, \eta+h)(x) - \phi(\mu, \eta)(x) \Big] \\
   = \lim_{h \to 0} \frac{1}{h} \Big[\phi(\eta, \eta+h)(y) - \phi(\eta, \eta)(y) \Big] = \partial_\eta \phi(\eta, \eta)(y).
\end{multline*}

\end{proof}

If the function $\Phi(\eta, x)$ is Lipschitz continuous in the ``initial values'' $x$, the Gronwall lemma ensures Lipschitz continuity of $x \to \phi(\mu, \eta)(x)$, but for changing shock topologies, this assumption is too strong. E.g in Figure \ref{fig:transform-singularity} we have already seen that $x \to \phi(\mu, \eta)(x)$ has a jump. Likewise, $\Phi$ is the corresponding direction field and has a jump at the same location.

However, we also see that this loss of Lipschitz continuity is only necessary at the one parameter where the shock topology changes, denoted by $\muSingular$. We now assume that approaching this parameter the Lipschitz continuity does not blow off too quickly by requiring that $|\eta - \muSingular| \Phi(\eta, x)$ is Lipschitz continuous uniformly in $\eta$. Then, we obtain the following modified stability result.

\begin{lemma}
  \label{lemma:stability}

  Assume that $|\eta - \muSingular| \Phi(\eta, x)$ is Lipschitz continuous with respect to $\eta$ with Lipschitz constant $L$ and that $\muSingular \not\in [\mu, \eta]$. Then, we have
  \[
    |\phi(\mu, \eta)(x) - \phi(\mu, \eta)(y)| \le \left| \frac{\eta-\muSingular}{\mu-\muSingular} \right|^L |x-y|.
  \]

\end{lemma}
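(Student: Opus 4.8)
The plan is to regard $\eta \mapsto \phi(\mu,\eta)(x)$ as the flow of the direction field $\Phi$ and to run a Gronwall-type argument on the difference of two trajectories, the only twist being that the spatial Lipschitz rate of $\Phi$ now degenerates like $1/|\eta-\muSingular|$ as $\eta$ approaches the singular parameter. Concretely, fix $x,y$ and set $z(\eta) := \phi(\mu,\eta)(x)$ and $w(\eta) := \phi(\mu,\eta)(y)$. By Lemma \ref{lemma:ode} both solve the same flow ODE $\frac{d}{d\eta} z = \Phi(\eta, z)$ with initial data imposed at $\eta = \mu$, namely $z(\mu)=x$ and $w(\mu)=y$. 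I would therefore start from the integral form
\[
z(\eta) - w(\eta) = (x-y) + \int_\mu^\eta \big[\Phi(s,z(s)) - \Phi(s,w(s))\big]\, ds.
\]

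Next I would translate the hypothesis into the spatial Lipschitz estimate it encodes: the assumption that $|\eta-\muSingular|\,\Phi(\eta,\cdot)$ is Lipschitz with constant $L$ (uniformly in $\eta$, as in the surrounding discussion) gives $|\Phi(s,z)-\Phi(s,w)| \le \frac{L}{|s-\muSingular|}\,|z-w|$ for every $s$ strictly between $\mu$ and $\eta$, the factor $|s-\muSingular|$ absorbing the blow-up of the rate near $\muSingular$. The hypothesis $\muSingular \notin [\mu,\eta]$ is exactly what keeps the weight $\frac{1}{|s-\muSingular|}$ bounded, hence integrable, on the whole interval of integration, so the pole of the rate is never hit. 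Substituting this estimate into the displayed identity and applying the integral Gronwall lemma yields
\[
|z(\eta)-w(\eta)| \le |x-y|\,\exp\!\left( \int_\mu^\eta \frac{L}{|s-\muSingular|}\, ds \right).
\]

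The computation that produces the stated constant is simply the evaluation of this integral. Since $s-\muSingular$ keeps a constant sign on the interval between $\mu$ and $\eta$, an antiderivative of $\frac{1}{|s-\muSingular|}$ is $\pm\ln|s-\muSingular|$, and inserting the endpoints gives $\int_\mu^\eta \frac{L}{|s-\muSingular|}\, ds = L\,\ln\!\big|\frac{\eta-\muSingular}{\mu-\muSingular}\big|$; exponentiating produces precisely the factor $\big|\frac{\eta-\muSingular}{\mu-\muSingular}\big|^{L}$ asserted in the lemma.

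I expect the main obstacle to be bookkeeping of orientation and signs rather than any hard estimate. Gronwall must be applied in the direction in which the trajectories spread (from the endpoint closer to $\muSingular$ toward the one farther away), and one should check that the positive quantity $|z-w|$ is differentiable so that the logarithmic form $\frac{d}{d\eta}\ln|z-w| \le \frac{L}{|\eta-\muSingular|}$ is licit; this follows from uniqueness for the flow ODE, which keeps $z(\eta) \ne w(\eta)$ once $x \ne y$. In the regime relevant to the TSI the target $\mu$ sits closer to the topology change than the snapshot parameter $\eta$, i.e. $|\eta-\muSingular| \ge |\mu-\muSingular|$, so the exponent $L\ln|(\eta-\muSingular)/(\mu-\muSingular)|$ is nonnegative and the bound captures exactly this expanding orientation; keeping that orientation straight is the one place where care is genuinely needed.
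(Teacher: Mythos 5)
Your proposal is correct and follows essentially the same route as the paper's proof: rewrite the flow ODE of Lemma \ref{lemma:ode} in integral form, use the degenerate spatial Lipschitz bound $|\Phi(s,z)-\Phi(s,w)|\le \frac{L}{|s-\muSingular|}|z-w|$ (valid since $\muSingular\notin[\mu,\eta]$), apply Gronwall, and evaluate the logarithmic integral to obtain the factor $\left|\frac{\eta-\muSingular}{\mu-\muSingular}\right|^{L}$. Your additional remarks on orientation and on the differentiability of $|z-w|$ are careful touches the paper omits, but they do not change the argument.
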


\begin{proof}

We compute the difference of two ODE solutions with different initial values along standard lines: Rewriting the ODEs \eqref{eq:transform-ode} for $x_\eta := \phi(\mu, \eta)(x)$ and $y_\eta := \phi(\mu, \eta)(y)$ as an integral equation, subtracting and taking absolute values, we obtain
\begin{align*}
  |x_\eta - y_\eta| & \le |x-y| + \int_\mu^\eta |\Phi(\xi, x_\xi) - \Phi(\xi, y_\xi)| \, d \xi \\
  & \le |x-y| + \int_\mu^\eta L \frac{|x_\xi - y_\xi|}{|\xi-\muSingular|} \, d \xi
\end{align*}
By the Gronwall inequality, this implies that
\[
  |x_\eta - y_\eta| \le |x-y| e^{\int_\mu^\eta \frac{L}{|\xi-\muSingular|} \, d \xi} = |x-y| \left| \frac{\eta-\muSingular}{\mu-\muSingular} \right|^L
\]
which completes the proof.

\end{proof}

We see that the transforms $x \to \phi(\mu, \eta)(x)$ are Lipschitz continuous with Lipschitz constant that depends on the ratio $\frac{\eta-\muSingular}{\mu-\muSingular}$ of the distances of $\mu$ and $\eta$ to the singularity. The important observation in our context is that for an $hp$ adaptive scheme this ratio is bounded for all cells on which we compute a TSI. Indeed, with the assumptions of Section \ref{sec:error-hp-1d}, we use a piecewise constant approximation on cells with level $L$ and $L-1$ and on all other cells $T$ on level $\ell < L-1$ by Lemma \ref{lemma:level-dist}, we have $|\mu - \bmu| \ge \operatorname{dist}(\bmu, T) \ge \frac{1}{2} 2^{-\ell}$ so that
\[
  \frac{|\eta-\muSingular|}{|\mu-\muSingular|} \le \frac{|\eta-\mu|}{|\mu-\muSingular|} + \frac{|\mu-\muSingular|}{|\mu-\muSingular|} \le \frac{2^{-\ell}}{\frac{1}{2} 2^{-\ell}} + 1 = 3,
\]
so that the Lipschitz constant is indeed bounded on all relevant cells.

Although, in this paper we assume that there is a transform $\phi$ that perfectly aligns the jumps of $u$ away form the shock topology changes, in practice we can only compute a perturbation $\varphi$ thereof, with errors coming from the limited number of degrees of freedom in its approximation and the numerical optimization of \eqref{eq:opt}. The paper \cite[Section 3]{Welper2015} contains an analysis of this perturbation error of a single cell TSI \eqref{eq:tsi}, stating that
\[
  \|u_n(\cdot, \mu; \phi) - u_n(\cdot, \mu; \varphi)\|_{L_1(\Omega)} \le c \sup_{\eta \in \param} \|\phi(\mu, \eta) - \varphi(\mu, \eta)\|_{L_1(\Omega)},
\]
with a constant $c$ that depends on the TSI degree (Lebesgue constant), the total variation norm of the snapshots $u(\cdot, \eta)$, $\eta \in \param_n$ and some properties of the transforms. The most important assumption on the latter is that
\[
  \phi(\mu, \eta)_* \lambda (A) \le \gamma \lambda(A)
\]
for every measurable set $A \subset \Omega$, some constant $\gamma \ge 0$, Lebesgue measure $\lambda$ and push-forward measure $\phi(\mu, \eta)_* \lambda$. In the $h$ or $hp$-adaptive case, this stability estimate can be applied cell-wise and the latter assumption can be directly verified by Lemma \ref{lemma:stability}. Indeed, by the decomposition \eqref{eq:transform-decomposition} we have that $\phi(\mu, \eta)^{-1}(x) = \phi(\eta, \mu)(x)$ so that we obtain
\[
  \phi(\mu, \eta)_* \lambda (A) = \lambda\big( \phi(\eta, \mu)(A) \big) \le \left| \frac{\eta-\muSingular}{\mu-\muSingular} \right|^L \lambda(A),
\]
where the latter estimate uses the Lipschitz constant form Lemma \ref{lemma:stability} and well known estimates of Lebesgue measures under Lipschitz maps, see e.g. \cite{EvansGariepy2015}. As argued above, in reasonable scenarios this Lipschitz constant is uniformly bounded and we obtain a stable method.

\section{Higher Parameter Dimensions}
\label{sec:multiple-param-dim}

In this section, we extend the $h$ or $hp$ adaptive TSI of the last section to higher parameter dimensions. 

\subsection{Coordinate wise TSI}
\label{sec:tensor-tsi}

\paragraph{The method} As already discussed in the introduction, the local refinement strategy of Section \ref{sec:hp-adaptivity} is questionable in higher parameter dimensions. Therefore, in this section we discuss a tensor product type construction and its merits for high parameter dimensions. To this end, let us assume that the parameter domain is a Cartesian product $\param = \param^1 \times \cdots \times \param^d$. Then, we can easily construct an interpolation operator for Banach space valued functions by
\begin{equation}
\interpol_{n_1} \otimes \cdots \otimes \interpol_{n_d},
\label{eq:tensor-interpol}
\end{equation}
where each $\interpol_{n_i}$ is a polynomial interpolation operator \eqref{eq:interpol} of degree $n_i$ acting on $\param^i$. In principle, we can apply a similar construction to the TSI interpolation operator \eqref{eq:tsi-operator}. However, a tensor product is not directly applicable because $\tsi_n$ is not linear. To this end, note that we can rewrite the tensor product interpolation \eqref{eq:tensor-interpol} as iteratively applying an interpolation to each parameter coordinate:
\begin{equation*}
\interpol_{n_1} \otimes \cdots \otimes \interpol_{n_d} = (\id \otimes \cdots \otimes \id \otimes \interpol_{n_d}) \cdots  (\interpol_{n_1} \otimes \id \otimes \cdots \otimes \id),
\end{equation*}
where $\id$ is the identity operator. Although, we cannot build a tensor product of the non-linear TSI interpolation operator \eqref{eq:tsi-operator}, we can still apply the TSI in the latter component-wise sense. To this end, we first compute snapshots $u(\cdot, \boldsymbol{\eta})$ for the interpolation points $\boldsymbol{\eta} \in \param_{\boldsymbol{n}} := \param_{n_1}^d \times \dots \times \param_{n_d}^d$. Applying the TSI to the first component yields
\begin{equation*}
u(x,\mu_1, \eta_2, \dots, \eta_d) \approx (\tsi_{n_1}^1 u)(x, \mu) := \sum_{\eta_1 \in \param_{n_1}^1} \ell_{\eta_1}(\mu_1) u(\phi(\mu_1, \eta_1)(x), \eta_1, \eta_2, \dots, \eta_d).
\end{equation*}
Afterwards, we know approximations of $u(x, \mu_1, \eta_2, \dots, \eta_d)$ for all $x \in \Omega$, $\mu_1 \in \param^1$ but only for discrete $\eta_i \in \param_{n_i}^i$, $i=2, \dots, d$.

We can now apply the TSI with respect to the second variable $\mu_2$. We can do so by using transforms $\phi(\mu_2, \eta_2)(x)$. However, in the $h$-adaptive case it will be crucial that we allow some more flexibility and actually use transforms $\phi(\mu_2, \eta_2)(x,\mu_1) \in \Omega \times \param^1$ that also transform the first parameter variable. This requires us to evaluate the ``snapshots'' $(\tsi_{n_1}^1 u)(\tilde{x}, \tilde{\mu}_1, \eta_2, \dots, \eta_d)$ at any transformed values $(\tilde{x}, \tilde{\mu}_1) = \phi(\mu_2, \eta_2)(x, \mu_1)$. This does not cause any difficulties because we know those values form our fist TSI in $\mu_1$ direction.

Continuing this algorithm, let us denote by
\[
(\tsi_{n_i}^{i, \boldsymbol{j}} u)(x, \boldsymbol{\mu}) = (\tsi_{n_i}^{i, j_1 \dots j_r})(x, \boldsymbol{\mu})
\]
the result of the TSI with respect to the variable $\mu_i$ where the variables $x$ and $\boldsymbol{\mu}_{\boldsymbol{j}} := (\mu_{j_1}, \dots, \mu_{j_r})$ are transformed. Then, we obtain a \emph{component-wise TSI}
\begin{equation}
\tsi_{\boldsymbol{n}} := \tsi_{n_d}^{d, 1 \cdots d-1} \circ \cdots \circ \tsi_{n_1}^{1,}
\label{eq:tensor-tsi}
\end{equation}
with degrees $\boldsymbol{n} = (n_1, \dots, n_d)$. This component-wise TSI depends on the ordering of involved TSIs, which is left for future research.

\paragraph{Computing the transforms}

Each TSI $\tsi_{n_1}^{1, 1 \dots i-1}$ is applied to a function $w(x, \mu_1, \dots, \mu_{i-1}, \eta_i, \dots, \eta_d)$ that is the output of previous steps of the algorithm, where $x$, $\mu_1, \dots, \mu_{i-1}$ are transformed and $\eta_{i+1}, \dots, \eta_d$ are frozen discrete interpolation points. To find the necessary transforms, as in \eqref{eq:opt}, we solve the optimization problem
\begin{equation}
\phi = \argmin_{\phi \in \transforms} \, \sup_{\eta_i \in \param_T^i} \|(Id - \tsi_{n_i}^{i, \boldsymbol{j}})w(\cdot, \eta_i, \eta_{i+1}, \dots, \eta_d)\|_{L_1(\Omega \times \param^{\boldsymbol{j}})},
\label{eq:opt-tensor}
\end{equation}
where $Id$ is the identity, $\transforms$ is a set of admissible transforms and $\param_T^i \subset \param^i$ is a set of training parameters. In order to provide the necessary input for the TSIs of the remaining parameter components, we have to solve all independent optimization problems for all interpolation points $\eta_{i+1}, \dots, \eta_d$.

First note that in the coordinates $\param^1 \times \dots \times \param^{i-1}$ we use the $L_1$-norm as e.g. opposed to the $L_\infty$-norm that we usually use for the parameter. As shown in \cite{Welper2015}, this ensures that the objective function is Lipschitz continuous. In particular, via the Rademacher Theorem this implies that it is differentiable almost everywhere, which allows us to use gradient based optimizers.

To calculate the necessary $L_1$-norms, we must be able to evaluate $w$ for all training parameters $\eta_i \in \param^i_T$. However, form the TSIs in the previous coordinates we only know $w$ for the interpolation points $\param^i_{n_i}$ which must be distinct from the training samples. We can resolve this issue in two ways.

First, we can calculate additional TSIs 
\[
  \tsi_{n_k}^{i-1, 1 \cdots i-2} \circ \cdots \circ \tsi_{n_1}^{1} u
\]
not only for $\mu_i$ frozen to values in $\param^i_{n_i}$ but also for $\mu_i$ in $\param^i_T$. This ensures that we can evaluate the integrand in the $L_1$-norm in \eqref{eq:opt-tensor} at all points of the integration domain, so that we can use any quadrature rule.

The second solution is to note that $w(x, \mu_1, \dots, \mu_{i-1}, \mu_i, \eta_{i+1}, \dots, \eta_d)$ is an TSI approximation of $u(x, \mu_1, \dots, \mu_{i-1}, \mu_i, \eta_{i+1}, \dots, \eta_d)$. We can resort back to the parametric PDE and compute this function at all required quadrature points. Of course, this severely limits their number in the $\mu_1, \dots, \mu_{i-1}$ variables. However, the goal of the optimization is to obtain a suitable transform, not the accuracy of the $L_1$-error. This can often be achieved by a very coarse quadrature as shown in the numerical experiments below.

\paragraph{Number of degrees of Freedom} Let us examine the number of degrees of freedom involved in this method. For the online phase, we have to store the snapshots and all transforms involved in the component-wise TSI. For simplicity, we assume that for each coordinate direction we use the same number $n = n_i$ of interpolation points. Then, we need $n^k$ snapshots on a tensor grid. In order to account for the transforms, we need a discrete representation thereof. More precisely, for each interpolation point $\eta_i \in \param^i_n$ we have to approximate the transform $[(x, \mu_1, \dots, \mu_{i-1}), \mu_i] \to \phi(\mu_i, \eta_i)(x, \mu_1, \dots, \mu_{i-1})$. A simple possibility is to choose a basis $\psi_j(x)$, $j=1,\dots,K$ for the physical axis and simply reuse the interpolation with nodes $\param^i_n$ for each parameter axis, which yields
\begin{equation}
  \phi(\mu_i, \eta_i)(x, \mu_1, \dots, \mu_{i-1}) \approx \sum_{\boldsymbol{\eta} \in \param^i_n\times \cdots \times \param^1_n} \ell_{\eta_i}(\mu_i) \cdots \ell_{\eta_1}(\mu_1) \left( \sum_{j=1}^K c_{\boldsymbol{\eta}, j} \psi_j(x)
  \label{eq:discrete-transform}
\right).
\end{equation}
Then, we need $n^i K$ degrees of freedom for each transform for coordinate axis $i$. There are $n$ such transforms, for each $\eta_i \in \param^i_n$, and $i=1, \dots d$ axes so that we need
\[
  N = \sum_{i=1}^{d} n n^i K = n \frac{n^{d+1}-1}{n-1} K -nK\le 2 n^{d+1} K
\]
degrees of freedom. For a comparison, without the coordinate-wise application of the TSI, we need $n^d K $ degrees of freedom for each of the $n^d$ transform $(x, \boldsymbol{\mu}) \to \phi(\boldsymbol{\mu}, \boldsymbol{\eta})(x)$, $\boldsymbol{\eta} \in \param_{\boldsymbol{n}}$ and thus $n^{2d}K$ in total, which is substantially more that in the component-wise case. Nonetheless, also in the latter case the number of degrees of freedom for the transforms and the number of snapshots scale exponentially with the dimension $d$, which is expected for a full tensor product type approximation.

Therefore, component-wise TSI cannot deal with the curse of dimensionality. However, it is possible to use different polynomial degrees in each coordinate direction and to consider linear combinations of various such choices. This is the starting point for sparse grids \cite{BungartzGriebel2004} or sparse polynomial expansions \cite{CohenDeVoreSchwab2010,CohenDeVoreSchwab2011,CohenDeVore2015}. How such techniques can be combined with component-wise TSI is beyond the scope of this paper and will be treated in future research.

\subsection{Coordinate wise \texorpdfstring{$hp$}{hp}-adaptive TSI}
So far, we have ignored $h$-refinement as described in Section \ref{sec:hp-adaptivity}. Before we incorporate them into our method, let us first consider the classical case again. In contrast to non-adaptive interpolation, we usually do not ``tensorize'' $1d$ $hp$-adaptive finite element methods to obtain approximations for higher dimensional functions. This only makes sense if the singularities are aligned with coordinate axes. However, for the TSI the situation is different: It is designed to align singularities with the coordinate axes! In fact that was our starting point in Figure \ref{fig:tsi-motivation}: We chose the inner transform in a way the the jump singularity is ``invisible'' in the parameter direction, which is equivalent to be aligned with the coordinate axis. Therefore, replacing the TSIs $\tsi_{n_i}^{i, 1, \dots, i-1}$ by its $hp$-adaptive version from Section \ref{sec:hp-adaptivity} in the component-wise TSI \eqref{eq:tensor-tsi} makes perfect sense.

This concept is illustrated in Figure \ref{fig:tensor-adaption}. The problem is the same as in Figure \ref{fig:shock-location-change} with one added parameter dimension so that we deal with a function $u(x, \mu_1, \mu_2)$. For increasing $\mu_1$, we start out with two shocks which then collide into one shock. If we change the second parameter $\mu_2$, we have the same behavior with different jump and collision locations. The figure shows the jump locations in the $x,\mu_1$ plane for two different values of $\mu_2$ (solid and dashed lines). For this problem, we can reconstruct $(x, \mu_1) \to u(x, \mu_1, \mu_2)$ for fixed $\mu_2$ by the $hp$-adaptive TSI just as in Section \ref{sec:hp-adaptivity}. For the second TSI in the $\mu_2$ direction, we apply a transform for the entire $x,\mu_1$-plane. That allows us to align the dashed jump set exactly with the solid jump set in the figure. After alignment the jumps are again ``invisible'' in the $\mu_2$-direction and interpolation is expected to yield good results. Note that in case the jump-sets in the $x,\mu_1$-plane cannot be matched, we can localize by an $hp$-adaptive strategy in the $\mu_2$-variable.
\begin{figure}[htb]

  \hfill
  \begin{tikzpicture}[scale=1.5]

	\draw[thick] (0,0) rectangle (2,2);

	\node[below] at (1,0) {$x$};
	\node[below] at (1,0) {$\phantom{\mu^2}$};
	\node[left] at (0,1) {$\mu_1$};

	\draw[thick, red] (0.5,0) to[out=80, in=-120] (1, 1.2);
	\draw[thick, red] (1.7,0) to[out=120, in=-40] (1, 1.2);
	\draw[thick, red] (1,1.2) to[out=120, in=-40] (0.4, 2);

	\draw[thick, dashed, red] (0.5,0) to[out=80, in=-120] (0.8, 1.3);
	\draw[thick, dashed, red] (1.7,0) to[out=120, in=-40] (0.8, 1.3);
	\draw[thick, dashed, red] (0.8,1.3) to[out=120, in=-40] (0.2, 2);

  \end{tikzpicture}
  \hfill
  \begin{tikzpicture}[scale=1.5]

        \fill[thick, color=black!20] (0,0) rectangle (2,2);        
        \node[below] at (1,0) {$\mu^2$};
        \node[left] at (0, 1) {$\mu^1$};
        
        \fill[color=black!80] (0,0) -- (0,1) to[out=30, in=210] (2,1) -- (2,0) -- (0,0);
        
  \end{tikzpicture}
  \hfill~

	\caption{Left: Jump locations of a function $u(x, \mu_1, \mu_2)$ in the $x, \mu_1$-plane for two different values of $\mu_2$ (dashed and solid lines). Right: Regions in parameter space with two and one jump.}
	\label{fig:tensor-adaption}
\end{figure}
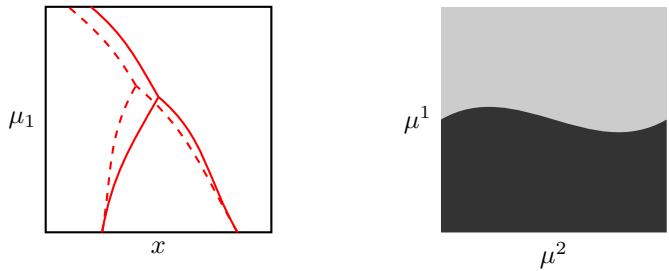

Thus, similar to the non-adaptive case, let us denote by
\[
(\tsihp_{n_i}^{i, j_1 \dots j_r})(x, \mu_1, \dots, \mu_d)
\]
the result of the $hp$-adaptive TSI with respect to the variable $\mu_i$, where the variables $x$ and $\mu_i$, $i \in \{j_1, \dots, j_r\}$ are transformed and the remaining variables are frozen. Then, we obtain the \emph{coordinate-wise $hp$-adaptive TSI} by the composition
\begin{equation}
\tsi_{\boldsymbol{n}} := \tsihp_{n_d}^{d, 1 \cdots d-1} \circ \cdots \circ \tsihp_{n_1}^{1,}
\label{eq:tensor-hptsi}
\end{equation}
with target accuracies $\boldsymbol{n} = (n_1, \dots, n_d)$.

\subsection{Comparison of \texorpdfstring{$hp$}{hp}-TSI and component-wise \texorpdfstring{$hp$}{hp}-TSI}

In principle both the $hp$-TSI and the component-wise $hp$-TSI can be applied to parameter dimensions larger than one. In this section we highlight some differences.

In $hp$-adaptive finite elements, interpolation or TSI, we choose interpolation points subject to the partition $\grid$ of the domain $\param$. In order to make this more concrete, let $\param_N$ be the set of all interpolation points involved in the $hp$-adaptive approximation. Then, for a parameter $\mu$, the active interpolation points $A(\mu) \subset \param_N$ that are used for the approximation of $u(\cdot, \mu)$ are completely determined by the partition, i.e.
\begin{equation}
  A(\mu) = \param_N \cap T
  \label{eq:partition-active-points}
\end{equation}
in case $\mu \in T \in \grid$. Although based on local refinements, such a partitioning of the active coefficients is not possible for the component-wise TSI. This can be easily be seen in Figure \ref{fig:no-partition} in case of two parameter dimensions. We first freeze the value of $\mu_2$ to two different interpolation points and perform an $hp$-adaptive interpolation in the $\mu_1$ coordinate. Let us assume that for the left value of $\mu_2$ we have one cell subdivision and none for the right value of $\mu_2$. Afterwards, we know all values of $u(x, \mu_1, \mu_0)$ along the dashed lines and perform an interpolation in the $\mu_2$ direction. For simplicity, we assume that all transforms are identities. If all interpolations are of first order, this results e.g. in the interpolation points $\mu^1, \dots, \mu^6$ shown in the figure.

For the interpolation at the point $a$, we thus obtain the active indices $A(a) = \{\mu^1, \mu^2, \mu^5, \mu^6\}$. Likewise, for the interpolation of $b$, we obtain $A(b) = \{\mu^3, \mu^4, \mu^5, \mu^6\}$. It follows that the interpolations at $a$ and $b$ share some active coefficients but not all. This is not possible for a partition based selection \eqref{eq:partition-active-points} of active indices. 

It is well understood that adaptive partitions are questionable in higher parameter dimensions. However, as the argument shows, the component-wise $hp$ adaptive TSI is not based such partitions in the usual sense. What this entails for the prospects to use the method (or rather extensions thereof as discussed at the end of Section \ref{sec:tensor-tsi}) in higher parameter dimensions is left for future research.

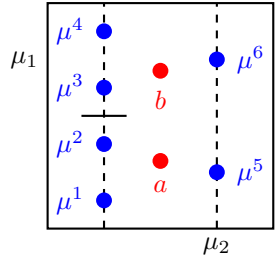
\begin{figure}[htb]

	\centering
	\begin{tikzpicture}[scale=1.5]

	\draw[thick] (0,0) rectangle (2, 2);
s
	\node[left] at (0, 1.5) {$\mu_1$};
	\node[below] at (1.5, 0) {$\mu_2$};

	\draw[thick, dashed] (0.5,0) -- (0.5,2);
	\draw[thick, dashed] (1.5,0) -- (1.5,2);

	\draw[thick] (0.3,1) -- (0.7,1);

	\fill[blue] (0.5, 0.25) circle[radius=2pt] node[left=4pt] {$\mu^1$};
	\fill[blue] (0.5, 0.75) circle[radius=2pt] node[left=4pt] {$\mu^2$};

	\fill[blue] (0.5, 1.25) circle[radius=2pt] node[left=4pt] {$\mu^3$};
	\fill[blue] (0.5, 1.75) circle[radius=2pt] node[left=4pt] {$\mu^4$};

	\fill[blue] (1.5,0.5) circle[radius=2pt] node[right=4pt] {$\mu^5$};
	\fill[blue] (1.5,1.5) circle[radius=2pt] node[right=4pt] {$\mu^6$};

	\fill[red] (1,0.6) circle[radius=2pt] node[below=4pt] {$a$};
	\fill[red] (1,1.4) circle[radius=2pt] node[below=4pt] {$b$};

	\end{tikzpicture}

	\caption{Example interpolation points of tensor $hp$ TSI}
	\label{fig:no-partition}
\end{figure}

\subsection{Error Estimates}

In this section, we establish some preliminary error analysis of the component-wise $hp$-adaptive TSI. In principle, we can apply the $hp$-adaptive TSI directly for moderate parameter dimensions, without the tensor-type construction of this section. However, this limits the convergence rate to low polynomial orders. Indeed, e.g. in the example in Figure \ref{fig:tensor-adaption}, we have two regions in parameter space, separated by a different number of shocks. Therefore, an $h$-adaptive TSI, applied to the two dimensional parameter space, is akin to the finite element approximation of functions with a curve-like singularity (in 2d). It is understood that such approximations have limited orders. The intuitive reason is that the jump-curve must be approximated by the shape of the finite element cells, which have linear boundaries, and therefore a limited order, irrespective of the polynomial degree inside of the cells. In contrast, the alignment of the singularities to coordinate axis in the component-wise $hp$-adaptive TSI can be carried out with arbitrary order. To flesh out the argument, in this section we provide an error analysis for a simple model case.

To this end, assume that the parameter domain can be split into two components $\param \times \hat{\param}$, with $\param \subset \real$ and $\hat{\param} \subset \real^{d-1}$ and two regions 
\begin{align*}
  R_1 & = \{ (\mu, \hmu): \,  \mu \le \bmu(\hmu) \}, & 
  R_2 & = \{ (\mu, \hmu): \,  \mu \ge \bmu(\hmu) \}, 
\end{align*}
for some function $\bmu: \hat{\param} \to \param$ so that in each region there is a transform with:
\begin{align}
  \phi(\mu, \hmu, \eta, \heta)(x) & \in J_{\eta, \heta} \Leftrightarrow x \in J_{\mu, \hmu}, & (\mu, \hmu), (\eta, \heta) & \in R_i, & i & = 1,2,
  \label{eq:jump-align}
\end{align}
where $J_{(\mu, \hmu)} \subset \Omega$ is the set of jump locations of $x \to u(x, \mu, \hmu)$. This condition implies that the transformed snapshots $u(\phi(\mu, \hmu, \eta, \heta)(x), \eta, \heta)$ have jumps only in the jump set $J_{(\mu, \hmu)}$ of the target parameter, i.e. informally the transform aligns the jumps. Also note that theses transforms only align jumps for parameters $(\mu, \hmu)$ and $(\eta, \heta)$ in the same region $R_i$.

We now perform a component-wise TSI, $hp$-adaptive in $\mu$ and non-adaptive in $\hmu$, which is sufficient in our model case so that
\[
  u(x, \mu, \hmu) \approx (\tsihp_n^2 \circ \tsi_n^1 u)(x, \mu, \hmu),
\]
where $\tsihp_n^1$ is the $hp$-adaptive TSI applied to the $\mu$ parameter with $n$ adaptively chosen interpolation points and $\tsi_n^2$ is the non-adaptive TSI applied to $\hmu$ and $n$ snapshots on a tensor grid. Based on \eqref{eq:jump-align}, we first construct the transforms involved in the respective TSIs and then provide a crude error analysis that shows the desired exponential decay.

\paragraph{Construction of the transforms}

For the $\tsi_n^1$, we choose the transform $\phi(\mu, \heta, \eta, \heta)$ restricted to frozen interpolation point $\heta$. For $\tsihp_n^2$, a simple restriction of \eqref{eq:jump-align} is insufficient because it can mix up the two regions with different jump set topology. Therefore, we choose functions $\varphi(\hmu, \heta): \param \to \param$ that are monotonically increasing such that
\[
  \varphi(\hmu, \heta)(\bmu(\hmu)) = \bmu(\heta).
\]
Note that for fixed $\hmu$ and $\heta$, these functions are univariate and scalar valued with one given point value and therefore easy to construct. Now, we define
\begin{equation}
  \phi(\hmu, \heta)(x,\mu) := \big[ \phi(\mu, \hmu, \varphi(\hmu, \heta)(\mu), \eta)(x), \quad \varphi(\hmu, \heta)(\mu) \big].
  \label{eq:transform2}
\end{equation}
Just as the transform in \eqref{eq:jump-align}, we denote this new transform by $\phi$, which is not ambiguous because of the different argument lists. We have to verify that this transform aligns the jumps. As can be seen e.g. in Figure \ref{fig:tensor-adaption}, the jump set for the $\hmu$-component TSI is given by
\[
  J_{\hmu} = \{ (x,\mu): \, x \in J_{\mu, \hmu}\},
\]
so that we have to ensure that
\[
  \phi(\hmu, \heta)(x) = J_{\heta} \quad \Leftrightarrow \quad x \in J_{\hmu}.
\]
To this end, note that for $(x,\mu) \in J_{\hmu}$ and $\eta = \varphi(\hmu, \heta)(\mu)$, by the definition of $\varphi$, we have $(\mu, \hmu) \in R_i$ if and only if $(\eta, \heta) \in R_i$, $i=0,1$ and thus  by \eqref{eq:jump-align} we obtain
\[
  \phi(\hmu, \heta)(x,\mu) \in J_{\eta, \heta} \times \{\eta\} \subset J_{\heta}.
\]
The case $(x,\mu) \not\in J_{\hmu}$ works analogously.

\paragraph{Error Analysis}

Now that we have established the existence of all necessary transforms, in view of Proposition \ref{prop:error-hp-1d} and Chebyshev interpolation on tensor grids \cite{GassGlauMahlstedtMair2015}, we assume that all component TSIs converge exponentially, uniformly in the respective frozen parameters:
\begin{equation}
  \begin{aligned} 
    \sup_{\mu \in \param} \|u(\cdot, \mu, \hmu) - (\tsihp_n^1 u)(\cdot, \mu, \hmu) \|_{L_1(\Omega)} & \le C \rho^{-c n^{1/2}}, & \hmu \in \hat{\param}    \\
     \sup_{\hmu \in \hat{\param}} \|u(\cdot, \mu, \hmu) - (\tsi_n^2 u)(\cdot, \mu, \hmu) \|_{L_1(\Omega)} & \le C \rho^{-n^{1/(d-1)}}, & \mu \in \param,
  \end{aligned}
  \label{eq:rate-assumption}
\end{equation}
for some constants $c,C \ge 0$.  Let
\[
  \Lambda_n := \sup_{\hmu \in \hat{\param}} \sum_{\heta \in \hat{\param}_n} |\ell_\heta(\hmu)|.
\]
be the Lebesgue constant of the interpolation $\tsi^2$ and assume that
\begin{equation}
  \phi(\mu, \hmu, \varphi(\hmu, \heta)(\mu), \eta)_* \lambda(A) \le \gamma \lambda(A),
  \label{eq:stab}
\end{equation}
for some constant $\gamma \ge 0$, where $\lambda$ is the Lebesgue measure and $\phi(\mu, \hmu, \varphi(\hmu, \heta)(\mu), \eta)_* \lambda$ the push-forward measure induced by the transform. This assumption is important for stability as e.g. discussed in \cite{Welper2015}.

\begin{proposition}
  \label{prop:tsi-tensor-error}
  Assume that there is a transform satisfying \eqref{eq:jump-align} and \eqref{eq:stab} and TSIs $\tsihp_n^1$ and $\tsi_n^2$ satisfying the error estimates \eqref{eq:rate-assumption}. Then
  \[
    \sup_{\hmu \in \hat{\param}} \|u(\cdot, \mu, \hmu) - (\tsihp_n^2 \circ \tsi_n^1 u)(\cdot, \mu, \hmu) \|_{L_1(\Omega)}  \le C \gamma \Lambda_n \rho^{-n^{1/2}} + C \rho^{-n^{1/(d-1)}}.
  \]
\end{proposition}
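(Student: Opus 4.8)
The plan is a standard add-and-subtract argument, splitting the total error at the intermediate approximation $\tsi_n^2 u$, in which only the $\hmu$-direction TSI is applied to the exact function. By the triangle inequality,
\[
\|u - \tsi_n^2 \tsihp_n^1 u\|_{L_1(\Omega)} \le \|u - \tsi_n^2 u\|_{L_1(\Omega)} + \|\tsi_n^2 u - \tsi_n^2 \tsihp_n^1 u\|_{L_1(\Omega)}.
\]
The first summand is exactly the error of the pure $\hmu$-direction TSI applied to $u$; by the second estimate in \eqref{eq:rate-assumption} it is bounded by $C \rho^{-n^{1/(d-1)}}$ uniformly in $\mu$ and $\hmu$, which already furnishes the second term of the claim.

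For the second summand the decisive point is that the transforms entering $\tsi_n^2$ are the fixed maps \eqref{eq:transform2}, constructed a priori from the jump-set geometry (through $\varphi$ and the region-aligning transform of \eqref{eq:jump-align}) and hence independent of the function being interpolated. For such a fixed transform $\tsi_n^2$ is a linear operator, so I may write $\tsi_n^2 u - \tsi_n^2 \tsihp_n^1 u = \tsi_n^2(u - \tsihp_n^1 u)$. Setting $g := u - \tsihp_n^1 u$ and spelling out the interpolation,
\[
(\tsi_n^2 g)(x, \mu, \hmu) = \sum_{\heta \in \hat{\param}_n} \ell_\heta(\hmu)\, g\big(\phi(\hmu, \heta)(x, \mu), \heta\big),
\]
I would take absolute values, integrate over $x \in \Omega$, and apply the change-of-variables formula for the push-forward measure. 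Since for each fixed $\heta$ the $x$-component of $\phi(\hmu, \heta)$ is precisely the map appearing in \eqref{eq:stab}, that hypothesis converts the inner integral into the factor $\gamma$:
\[
\int_\Omega \big|g\big(\phi(\hmu, \heta)(x, \mu), \heta\big)\big|\, dx \le \gamma\, \|g(\cdot, \varphi(\hmu, \heta)(\mu), \heta)\|_{L_1(\Omega)}.
\]
Summing against $|\ell_\heta(\hmu)|$ and taking the supremum over $\hmu$ then introduces the Lebesgue constant $\Lambda_n$.

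It remains to bound the right-hand side by the first rate in \eqref{eq:rate-assumption}. Because that estimate is uniform in the first parameter, evaluating $g$ at the transformed value $\varphi(\hmu, \heta)(\mu)$ and at the interpolation point $\heta \in \hat{\param}$ still yields $\|g(\cdot, \varphi(\hmu, \heta)(\mu), \heta)\|_{L_1(\Omega)} \le C \rho^{-c n^{1/2}}$, since $\heta$ is an admissible value of the frozen parameter and the supremum over the first variable controls any particular value of it. Collecting constants, the second summand is bounded by $C \gamma \Lambda_n \rho^{-c n^{1/2}}$, i.e. the first term of the claim after absorbing $c$ into the base. I expect the linearity step to be the main obstacle: one must justify that $\tsi_n^2$ acts linearly on $u - \tsihp_n^1 u$, which is valid only because the transform \eqref{eq:transform2} is built independently of its argument, and one must verify that \eqref{eq:stab} is exactly the hypothesis needed to turn the change of variables in the $x$-variable into the constant $\gamma$ while the frozen parameter travels along $\varphi(\hmu,\heta)$. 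Once these two points are secured, the remaining estimates are routine.
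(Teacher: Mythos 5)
Your proposal is correct and follows essentially the same route as the paper: the same triangle-inequality split at $\tsi_n^2 u$, the second rate assumption for the first term, and for the second term exactly the stability argument of the paper's Lemma \ref{lemma:stab} (Lebesgue constant plus the push-forward bound \eqref{eq:stab}), which you simply inline rather than isolate as a separate lemma. Your explicit justification of the linearity of $\tsi_n^2$ via the fixed, function-independent transform \eqref{eq:transform2} makes precise a step the paper uses implicitly.
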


In order to prove the proposition, we need the following stability estimate.

\begin{lemma}
\label{lemma:stab}
Let all assumptions of Proposition \ref{prop:tsi-tensor-error} be satisfied and $u(\cdot, \mu, \hmu)$,  $v(\cdot, \mu, \hmu) \in L_1(\Omega)$ for all $\mu \in \param$ and $\hmu$ in $\hat{\param}$. Then, we have
\[
  \| (\tsi_n^2 u)(\cdot, \mu, \hmu) - (\tsi_n^2 v)(\cdot, \mu, \hmu) \|_{L_1(\Omega)} \le \gamma \Lambda_n^{d-1} \|u(\cdot, \phi_2, \hmu) - v(\cdot, \phi_2, \hmu)\|_{L_1(\Omega)},
\]
with $\phi_2 = \varphi(\hmu, \heta)(\mu)$.
\end{lemma}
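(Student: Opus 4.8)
The plan is to unfold the definition of the $\hmu$-component TSI, reduce the estimate to a single change of variables in the physical variable, and then absorb the interpolation weights into the Lebesgue constant. Writing the tensor-product interpolation $\tsi_n^2$ over the grid $\hat{\param}_n$ explicitly and using the transform \eqref{eq:transform2}, I have for fixed $\mu,\hmu$
\[
  (\tsi_n^2 u)(x,\mu,\hmu) = \sum_{\heta \in \hat{\param}_n} \ell_\heta(\hmu)\, u\big(\phi_1(x), \phi_2, \heta\big),
\]
with the abbreviations $\phi_1(x) = \phi(\mu,\hmu,\varphi(\hmu,\heta)(\mu),\heta)(x)$ and $\phi_2 = \varphi(\hmu,\heta)(\mu)$, and likewise for $v$. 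Subtracting the two expressions, taking the $L_1(\Omega)$ norm in $x$, and applying the triangle inequality both under the integral and over the finite sum, I first arrive at
\[
  \|(\tsi_n^2 u)(\cdot,\mu,\hmu) - (\tsi_n^2 v)(\cdot,\mu,\hmu)\|_{L_1(\Omega)} \le \sum_{\heta \in \hat{\param}_n} |\ell_\heta(\hmu)| \int_\Omega \big| u(\phi_1(x),\phi_2,\heta) - v(\phi_1(x),\phi_2,\heta) \big| \, dx.
\]

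The second step is the change of variables $y = \phi_1(x)$ in each summand. Since $\phi_1$ is exactly the physical transform whose push-forward is controlled by \eqref{eq:stab}, I would use the defining property of the push-forward measure, $\int_\Omega g(\phi_1(x))\, d\lambda(x) = \int_\Omega g\, d(\phi_{1*}\lambda)$ for $g \ge 0$, together with $\phi_{1*}\lambda \le \gamma \lambda$, to obtain
\[
  \int_\Omega \big| u(\phi_1(x),\phi_2,\heta) - v(\phi_1(x),\phi_2,\heta) \big| \, dx \le \gamma \, \|u(\cdot,\phi_2,\heta) - v(\cdot,\phi_2,\heta)\|_{L_1(\Omega)}.
\]
It then remains to handle the Lagrange weights. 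By definition of the tensor Lebesgue constant, $\sum_{\heta} |\ell_\heta(\hmu)| \le \Lambda_n^{d-1}$ uniformly in $\hmu$, so pulling out the supremum over the nodes $\heta$ gives the asserted factor $\gamma \Lambda_n^{d-1}$ times $\sup_\heta \|u(\cdot,\phi_2,\heta) - v(\cdot,\phi_2,\heta)\|_{L_1(\Omega)}$, which is the quantity abbreviated by $\|u(\cdot,\phi_2,\hmu) - v(\cdot,\phi_2,\hmu)\|_{L_1(\Omega)}$ in the statement.

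The main obstacle is bookkeeping rather than analysis: one must get the direction of the push-forward right (the constant $\gamma$ in \eqref{eq:stab} bounds $\phi_{1*}\lambda$ precisely in the direction needed for the substitution, matching the convention fixed in \eqref{eq:stab}), and one must notice that both the inner transform $\phi_1$ and the second argument $\phi_2 = \varphi(\hmu,\heta)(\mu)$ depend on the summation index $\heta$. This node dependence is why the clean estimate requires either a supremum over $\heta \in \hat{\param}_n$ or, as used when this lemma feeds into Proposition \ref{prop:tsi-tensor-error}, the fact that the input difference $u - v$ (there $u - \tsi_n^1 u$) is controlled uniformly in the frozen node by the rate assumption \eqref{eq:rate-assumption}; under that uniformity the representative argument $(\cdot,\phi_2,\hmu)$ in the statement is justified. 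The only genuinely quantitative ingredient, the factor $\gamma$, is supplied entirely by \eqref{eq:stab}, while the factorization of the Lebesgue constant into $\Lambda_n^{d-1}$ is inherited from the tensor-product structure of the $\hmu$-interpolation.
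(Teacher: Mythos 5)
Your proof follows essentially the same route as the paper's: expand $\tsi_n^2$ as the Lagrange sum, apply the triangle inequality to extract the Lebesgue constant, and perform the change of variables $y=\phi_1(x)$ controlled by the push-forward bound \eqref{eq:stab} to produce the factor $\gamma$, with the node-dependence of $\phi_2$ handled by a maximum over $\heta\in\hat{\param}_n$ exactly as in the paper. Your closing remark on the $\heta$-dependence of $\phi_2$ and the interpretation of the exponent on $\Lambda_n$ (the paper's own proof and Proposition \ref{prop:tsi-tensor-error} actually use the multivariate constant $\Lambda_n$ rather than $\Lambda_n^{d-1}$) is a fair reading of a notational looseness in the statement, not a gap in your argument.
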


\begin{proof}
Let $\hat{\param}_n \subset \hat{\param}$ be the interpolation points of $\tsi_n^2$. By the definition of the TSI, the definition of the Lebesgue constant and \eqref{eq:stab}, we have
\begin{multline*}
\| (\tsi_n^2 u)(\cdot, \mu, \hmu) - (\tsi_n^2 u)(\cdot, \mu, \hmu) \|_{L_1(\Omega)} \\
\begin{aligned}
   & = \left\| \sum_{\heta \in \hat{\param}_n} \ell_\heta(\hmu) \big[ u(\phi(\hmu, \heta)(x, \mu), \heta) - v(\phi(\hmu, \heta)(x, \mu), \heta) \big] \right\|_{L_1(\Omega)} \\
   & \le \left(\sup_{\hmu \in \hat{\param}} \sum_{\heta \in \hat{\param}_n} |\ell_\heta(\hmu)| \right) \max_{\heta \in \hat{\param}_n} \left\| u(\phi(\hmu, \heta)(x, \mu), \heta) - v(\phi(\hmu, \heta)(x, \mu), \heta) \right\|_{L_1(\Omega)} \\
\end{aligned}
\end{multline*}
Abbreviating $\phi(\hmu, \heta)(x,\mu) = [\phi_1(x), \phi_2]$, where we have used that by \eqref{eq:transform2} the second component does not depend on $x$, we this yields
\begin{multline*}
\| (\tsi_n^2 u)(\cdot, \mu, \hmu) - (\tsi_n^2 u)(\cdot, \mu, \hmu) \|_{L_1(\Omega)} \\
\begin{aligned}
   & \le \Lambda_n \max_{\heta \in \hat{\param}_n} \int_\Omega |u(y, \phi_2, \heta) - v(y, \phi_2, \heta)| \, d \phi_1{}_* \lambda(y) \\
    & \le \gamma \Lambda_n \max_{\heta \in \hat{\param}_n} \int_\Omega | u(y, \phi_2, \heta) - v(y, \phi_2, \heta)| \, dy,
\end{aligned}
\end{multline*}

which completes the proof.

\end{proof}

\begin{proof}[Proof of Proposition \ref{prop:tsi-tensor-error}]

For all $\mu$ and $\hmu$, we have
\begin{multline*}
  \|u(\cdot, \mu, \hmu) - (\tsi_n^2 \circ \tsi_n^1 v)(\cdot, \mu, \hmu) \|_{L_1(\Omega)} \\
  \begin{aligned}
  & \le \|u(\cdot, \mu, \hmu) - (\tsi_n^2 u)(\cdot, \mu, \hmu) \|_{L_1(\Omega)} \\
  & \quad + \|(\tsi_n^2 u)(\cdot, \mu, \hmu) - (\tsi_n^2 \circ \tsi_n^1 u)(\cdot, \mu, \hmu) \|_{L_1(\Omega)}
  \end{aligned}
\end{multline*}
The first term on the right hand side can be estimated by assumption \eqref{eq:rate-assumption}:
\[
  \|u(\cdot, \mu, \hmu) - (\tsi_n^2 u)(\cdot, \mu, \hmu) \|_{L_1(\Omega)} \le C \rho^{-n^{1/(d-1)}}.
\]
For the second term, we use Lemma \ref{lemma:stab} and the convergence assumption \eqref{eq:rate-assumption} to obtain
\begin{multline*}
  \|(\tsi_n^2 u)(\cdot, \mu, \hmu) - (\tsi_n^2 \circ \tsi_n^1 u)(\cdot, \mu, \hmu) \|_{L_1(\Omega)} \\
   \le \gamma \Lambda_n \|u(\cdot, \phi_2, \hmu) - (\tsi_n^1 u)(\cdot, \phi_2, \hmu) \|_{L_1(\Omega)} \le C \gamma \Lambda_n  \rho^{-n^{1/2}},
  \end{multline*}
  where $\phi_2 = \varphi(\hmu, \heta)(\mu)$. Combining these estimates completes the proof.
\end{proof}

In conclusion, we see that although we need to resolve the lower dimensional manifold where the jump set topology changes by some $h$ refinement, we can in principle achieve exponential convergence rates. 

\section{Numerical Experiments}
\label{sec:experiments}

This section contains some numerical experiments for the Burgers equation and a compressible flow over a forward facing step. The implementation uses Deal.II \cite{BangerthHartmannKanschat2007} and OpenFoam \cite{OpenFoam2017} for the calculation of the snapshots and Tensorflow \cite{AbadiAgarwalBarhamEtAl2015} for the optimization of the transforms.

\subsection{Burgers' Equation - Shock Collision}
\label{sec:burgers-two-shocks}

As a first numerical test, we consider the Burgers equation 
\begin{align}
  u_t + \frac{1}{2}\left(u^2\right)_x & = 0, & u(0,x) & = \left\{ \begin{array}{ll} \mu & x< 0 \\ \frac{\mu}{2} & 0 \le x < 1 \\ 0 & x \ge 0 \end{array} \right.,
  \label{eq:burgers-two-shocks}
\end{align}
with two initial shocks that collide at time $t=\mu/2$. Although $t$ is usually a physical variable, we consider both $\mu$ and the time $t$ as parameters. Since the solutions are piecewise constant, we only consider $h$ refinement and low interpolation orders. 

In order for the gradient descent optimizer to properly find the jumps, we use a spacial mesh of step size $0.01$ and smooth the solutions $u(x,t,\mu)$ so that the jumps become smooth with a width of about two times the mesh size. This way, the derivatives have sharp gradients that are visible to the spacial quadrature and emulate the Dirac deltas that appear in the correct derivative. For the transforms we use polynomials of degree one in time and space and stop the $h$ refinement once the maximal cell error drops below $2 \cdot 0.01$.

Some numerical results are shown in Figures \ref{fig:two-shocks-fine} and \ref{fig:two-shocks-coarse}, for fine and coarse quadrature of the $L_1$-error as described after \eqref{eq:opt-tensor}, respectively. The errors are reported in Table \ref{table:results}. Note that the fine and coarse quadrature achieve similar errors. In addition, the error is already of an order of magnitude that is expected from snapshots on a grid with mesh-size $0.01$ and smoothed jump with roughly twice that size. This fact renders computational experiments with regard to convergence rates difficult: We wish to see exponential rates in the parameter but only achieve first order in the snapshots due to the jump singularities and uniform girds. Nonetheless, Figure \ref{fig:two-shocks-rates} shows some convergence rates in dependence of the number of snapshots. Indeed, we see that the error saturates at roughly the expected error of the snapshots. In Figures \ref{fig:two-shocks-fine} and \ref{fig:two-shocks-coarse}, we also observe that the $h$-adaptivity tends to refine for times that are a little smaller than the actual collision time. In view of the discussion in Section \ref{sec:singular} this is not completely unexpected: The cells adjacent to the one containing the shock topology change must be sufficiently refined in order to avoid singular transforms.

\begin{table}[htb]
  \begin{center}
  \begin{tabular}{c|c|c|c|c}

    Example & $\sharp$ Snapshots & $\sharp$ Snapshots & Error      & Error \\
            & (tsi)   & (all)              & (training) & (sample) \\
    \hline
    \hline
    
    \eqref{eq:burgers-two-shocks}, fine quadrature & 
    \csvreader[head to column names]{pics/two_shocks_data.csv}{}{\SnapshotsTsi & \SnapshotsAll & \ErrorTrain & \ErrorSample}
    
    \\

    \eqref{eq:burgers-two-shocks}, coarse quadrature & 
    \csvreader[head to column names]{pics/two_shocks_coarse_quad_data.csv}{}{\SnapshotsTsi & \SnapshotsAll & \ErrorTrain & \ErrorSample}

    \\
    
    \eqref{eq:burgers-shock-rwave} & 
    \csvreader[head to column names]{pics/shock_rwave_data.csv}{}{\SnapshotsTsi & \SnapshotsAll & \ErrorTrain & \ErrorSample}

    \\
    
    Section \ref{sec:ffstep} & 
    \csvreader[head to column names]{pics/ffstep_data.csv}{}{\SnapshotsTsi & \SnapshotsAll & \ErrorTrain & \ErrorSample}

  \end{tabular}
  \end{center}
  
  \caption{Results of various numerical experiments. The training error is the error calculated by the optimizer and the sample error is the maximal error at the parameters that are plotted in Figures \ref{fig:two-shocks-fine}, \ref{fig:two-shocks-coarse}, \ref{fig:shock-rwave} and \ref{fig:ffstep-snapshots}.}
  
  \label{table:results}
\end{table}

\pgfplotsset{axis function/.style={
  height=5cm, 
  yticklabel pos=right, 
  ticklabel style={font=\tiny}, 
  y label style={at={(axis description cs:0.2,0.5)}}
}}

\pgfplotsset{axis scatter/.style = {
  scatter/classes={
      train={mark size=1pt, color=red},
      nodes={mark=square*, mark size=1pt, color=blue},
      experiment={mark=diamond*, color=brown},
      collision={mark=-, thick, color=black}
    }
}}

\newcommand{\AddSnapshot}[2]{
      \addplot[mark=none, color=red, thick] table[x=x, y=#1, col sep=comma] {#2};
}

\newcommand{\AddTsi}[2]{
    \addplot[mark=none, color=blue, thick, dashed] table[x=x, y=#1, col sep=comma] {#2};
}

\newcommand{\AddNodes}[3]{
  \addplot[scatter, only marks, scatter src=explicit symbolic] table[x=#1, y=#2, meta=label, col sep=comma] {#3};  
}

\begin{figure}[htb]

  \hfill
  \begin{tikzpicture}
    \begin{axis}[axis function, xlabel=$x$, ylabel=$u$] 

      \AddSnapshot{true 0}{pics/two_shocks_2d.csv}
      \AddTsi{tsi 0}{pics/two_shocks_2d.csv}

      \AddSnapshot{true 1}{pics/two_shocks_2d.csv}
      \AddTsi{tsi 1}{pics/two_shocks_2d.csv}

      \AddSnapshot{true 2}{pics/two_shocks_2d.csv}
      \AddTsi{tsi 2}{pics/two_shocks_2d.csv}

      \AddSnapshot{true 3}{pics/two_shocks_2d.csv}
      \AddTsi{tsi 3}{pics/two_shocks_2d.csv}

    \end{axis}
  \end{tikzpicture}
  \hfill
  \begin{tikzpicture}
    \begin{axis}[axis function, axis scatter, xlabel=$\mu$, ylabel=$t$] 

      \AddNodes{mu}{time}{pics/two_shocks_params.csv}

      \newcommand{\CollisionTime}[2]{2.0 / (#1 - #2)}
      \addplot[scatter, only marks, scatter src=explicit symbolic] 
        table[meta=label, 
          x=mu,
          y expr={\CollisionTime{\thisrowno{0}}{0}}
        ] {
          mu label
          1.3  collision
          1.45 collision
          1.6  collision
        };
      
    \end{axis}
  \end{tikzpicture}
  \hfill~

  \caption{Left: Blue dashed line: Component-wise $h$-adaptive TSI of \eqref{eq:burgers-two-shocks}. The quadrature for calculating the $L_1$-error uses the same mesh size $0.01$ in the physical and time direction, see the discussion after \eqref{eq:opt-tensor}. Red solid line: Corresponding correct solutions of \eqref{eq:burgers-two-shocks}. Right: Location of the snapshots. Blue dots: snapshots used in the TSI reconstruction. Red dots: snapshots used for training only. Brown diamonds: Parameters of the plots in the left figure. Black hyphen: time of the shock collision.}
  
  \label{fig:two-shocks-fine}
\end{figure}

\begin{figure}[htb]

  \hfill
  \begin{tikzpicture}
  
      \begin{axis}[axis function, xlabel=$x$, ylabel=$u$]

      \addplot[mark=none, color=red, thick] table[x=x, y=true 0, col sep=comma] {pics/two_shocks_2d.csv};
      \addplot[mark=none, color=blue, thick, dashed] table[x=x, y=tsi 0, col sep=comma] {pics/two_shocks_coarse_quad_2d.csv};

      \addplot[mark=none, color=red, thick] table[x=x, y=true 1, col sep=comma] {pics/two_shocks_2d.csv};
      \addplot[mark=none, color=blue, thick, dashed] table[x=x, y=tsi 1, col sep=comma] {pics/two_shocks_coarse_quad_2d.csv};

      \addplot[mark=none, color=red, thick] table[x=x, y=true 2, col sep=comma] {pics/two_shocks_2d.csv};
      \addplot[mark=none, color=blue, thick, dashed] table[x=x, y=tsi 2, col sep=comma] {pics/two_shocks_coarse_quad_2d.csv};

      \addplot[mark=none, color=red, thick] table[x=x, y=true 3, col sep=comma] {pics/two_shocks_2d.csv};
      \addplot[mark=none, color=blue, thick, dashed] table[x=x, y=tsi 3, col sep=comma] {pics/two_shocks_coarse_quad_2d.csv};

    \end{axis}
  \end{tikzpicture}
  \hfill
  \begin{tikzpicture}
  
    \begin{axis}[axis function, axis scatter, xlabel=$\mu$, ylabel=$t$]

      \AddNodes{mu}{time}{pics/two_shocks_coarse_quad_params.csv}

      \newcommand{\CollisionTime}[2]{2.0 / (#1 - #2)}
      \addplot[scatter, only marks, scatter src=explicit symbolic] 
        table[meta=label, 
          x=mu,
          y expr={\CollisionTime{\thisrowno{0}}{0}}
        ] {
          mu label
          1.3  collision
          1.45 collision
          1.6  collision
        };
     
    \end{axis}
  \end{tikzpicture}
  \hfill~

  \caption{Same as in Figure \ref{fig:two-shocks-fine}, but with a coarse quadrature for the $L_1$-error in $t$ direction, as described after \eqref{eq:opt-tensor}.}

  \label{fig:two-shocks-coarse}

\end{figure}

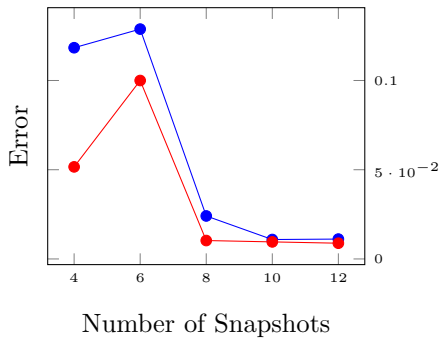
\begin{figure}[htb]

  \begin{center}
  \begin{tikzpicture}
    \begin{axis}[axis function, xlabel=Number of Snapshots, ylabel=Error] 

      \addplot[color=blue, mark=*] table[x=n_snapshots, y=error, col sep=comma] {pics/two_shocks_rate.csv};

      \addplot[color=red, mark=*] table[x=n_snapshots, y=error, col sep=comma] {pics/two_shocks_rate_coarse_quad.csv};

    \end{axis}
  \end{tikzpicture}
  \end{center}

  \caption{Errors for example \eqref{eq:burgers-two-shocks} with fine quadrature.}

  \label{fig:two-shocks-rates}

\end{figure}

\subsection{Burgers' Equation - Shock and Rarefaction Wave}

For a second numerical test, we use the same setup as in Section \ref{sec:burgers-two-shocks}, but with different parameter:
\begin{align}
u_t + \frac{1}{2}\left(u^2\right)_x & = 0, & u(0,x) & = \left\{ \begin{array}{ll} 1.5 & x< 0 \\ 0 & 0 \le x < 1 \\ \mu & x \ge 0 \end{array} \right., &
\begin{aligned}
  0 & \le t \le 2 \\
  -0.5 & \le \mu \le 0.5.
\end{aligned}
\label{eq:burgers-shock-rwave}
\end{align}
Again, we consider time $t$ and $\mu$ as parameters. The initial value has two jumps. The left yields a shock and the right a shock or a rarefaction wave, depending on the value of $\mu$. In any case, the shock or rarefaction wave collide with the left shock within the given time interval. This leaves us with a few changes in the topology of the jumps and kinks: First the collision, second the change form shock to rarefaction wave and third the transition from the left initial jump to a rarefaction wave in case $\mu>0$. 

We use TSIs of degree two and three for $\mu$ and $t$, respectively and polynomials of degree two for the spacial resolution of the transforms and the coarse quadrature described after \eqref{eq:opt-tensor}. The numerical errors are shown in Table \ref{table:results} and some plots are shown in figures \ref{fig:shock-rwave} and \ref{fig:shock-rwave-pcolor}. We see some local refinements for $\mu=-0.5$ at the time of the shock collision and for $\mu = 0.5$ at $t=0$, where the jump in the initial value becomes a rarefaction wave with two kinks. The collision of the rarefaction wave with the shock has little refinement. Nonetheless, the overall error is already close to the error that we expect from snapshots of a spacial resolution of $0.01$, so that further refinements are questionable. 

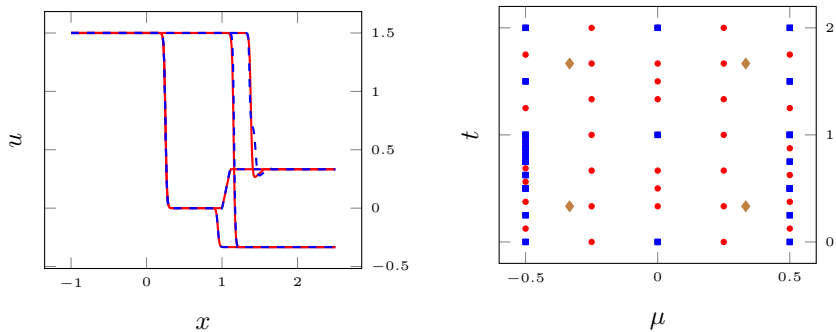
\begin{figure}[htb]
    
    \hfill
    \begin{tikzpicture}
    \begin{axis}[axis function, xlabel=$x$, ylabel=$u$] 
    
      \AddSnapshot{true 0}{pics/shock_rwave_2d.csv}
      \AddTsi{tsi 0}{pics/shock_rwave_2d.csv}
    
      \AddSnapshot{true 1}{pics/shock_rwave_2d.csv}
      \AddTsi{tsi 1}{pics/shock_rwave_2d.csv}
    
      \AddSnapshot{true 2}{pics/shock_rwave_2d.csv}
      \AddTsi{tsi 2}{pics/shock_rwave_2d.csv}
    
      \AddSnapshot{true 3}{pics/shock_rwave_2d.csv}
      \AddTsi{tsi 3}{pics/shock_rwave_2d.csv}
    
    \end{axis}
    \end{tikzpicture}
    \hfill
    \begin{tikzpicture}
    \begin{axis}[axis function, axis scatter, xlabel=$\mu$, ylabel=$t$] 
    
      \AddNodes{mu}{time}{pics/shock_rwave_params.csv}
        
    \end{axis}
    \end{tikzpicture}
    \hfill~
    
    \caption{Same as in Figure \ref{fig:two-shocks-fine}, but for the example \eqref{eq:burgers-shock-rwave}.}
    
    \label{fig:shock-rwave}
\end{figure}

\begin{figure}[htb]
    
    \pgfplotsset{plot graphics/burgers bounds/.style={
            xmin=-1, xmax=2.5, ymin=0.5, ymax=1.8
        }}
        
    \hfill
    \begin{tikzpicture}
    \begin{axis}[axis function, xlabel=$x$, ylabel=$t$]
    
    \addplot graphics[burgers bounds] {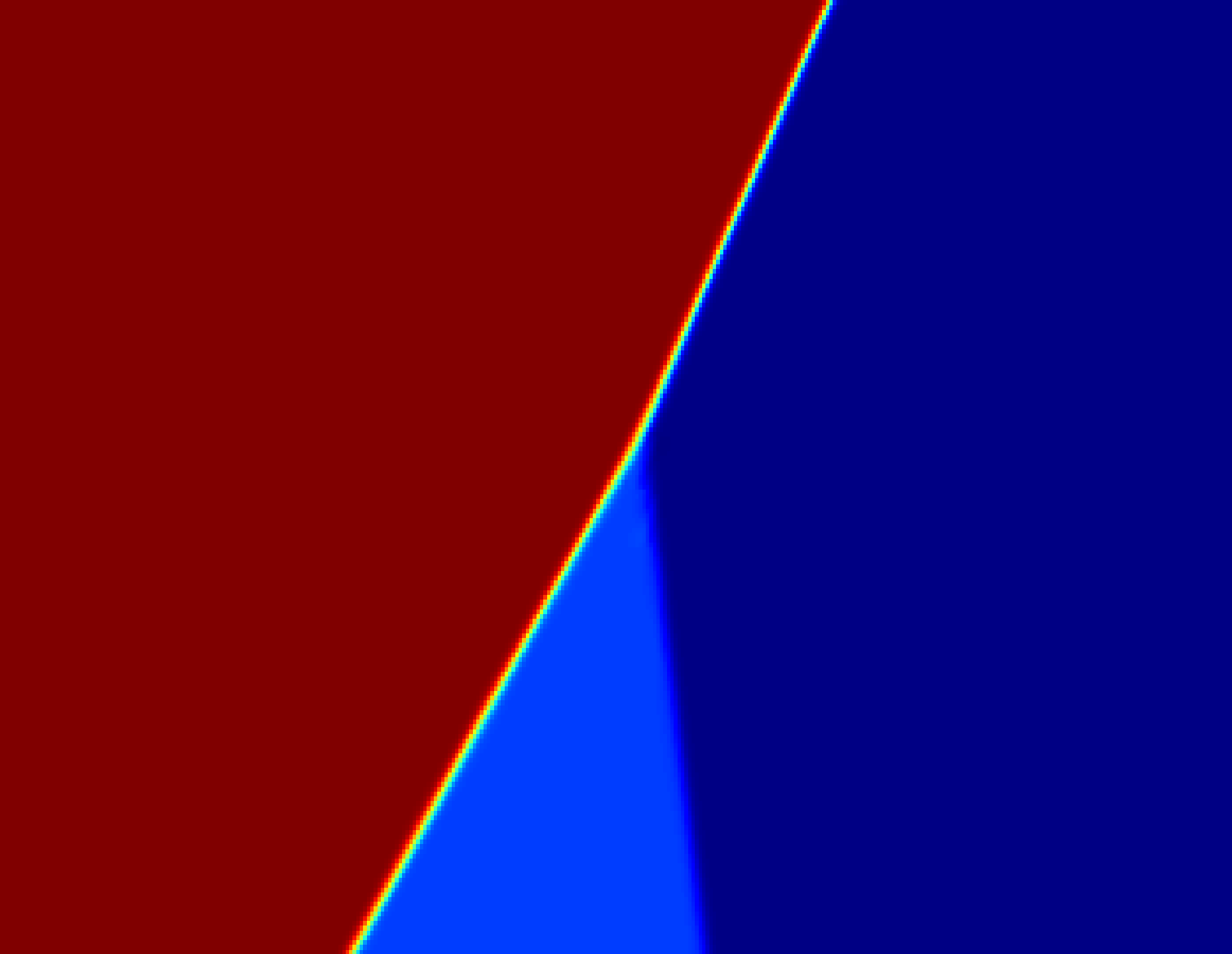};
    
    \end{axis}
    \end{tikzpicture}
    \hfill
    \begin{tikzpicture}
    \begin{axis}[axis function, xlabel=$x$, ylabel=$t$] 

    \addplot graphics[burgers bounds] {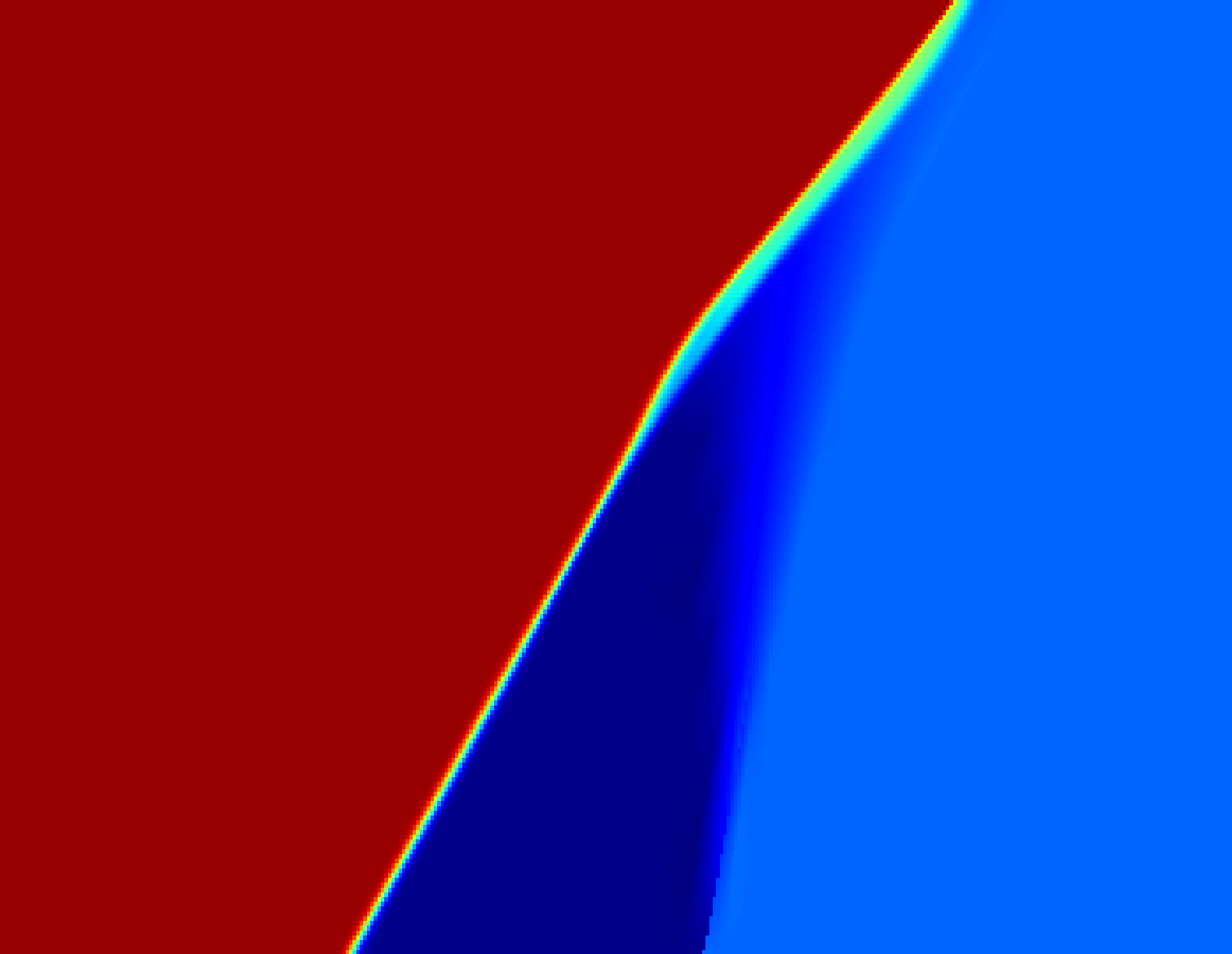};
        
    \end{axis}
    \end{tikzpicture}
    \hfill~

    \caption{Solutions of example \eqref{eq:burgers-shock-rwave} for the two values of $\mu$ given by the brown diamonds in Figure \ref{fig:shock-rwave} right.}
    
    \label{fig:shock-rwave-pcolor}
\end{figure}

\section{Forward Facing Step}
\label{sec:ffstep}

For a final test, we consider a compressible flow over a forward facing step from the OpenFoam tutorial \cite{OpenFoam2017}, with a mesh size of $h=0.025$. The first parameter is the inflow Mach number and the second, again for simplicity, time.

The transforms are of the form $\phi(\mu, \eta)(x) = x + \varphi(\mu, \eta)(x)$, where $\varphi(\mu, \eta)$ are finite elements of degree two on a initial coarse mesh. In order to ensure that the transforms $\phi(\mu, \eta)$ map $\Omega$ to itself, we us the boundary condition
\[
\begin{aligned}
  \varphi(\mu, \eta)(x) \cdot \nu & = 0, & x & \in \partial \Omega,
\end{aligned}
\]
where $\nu$ is the outward normal. For sufficiently small $\varphi(\mu, \eta)$ this condition ensures that the transforms are homeomorphisms of $\Omega$ and allows the transform to slide points along the boundary.

The errors for TSI degree two are reported in Table \ref{table:results}, the snapshots and training snapshots in Figure \ref{fig:ffstep-snapshots} and some plots are shown in Figure \ref{fig:ffstep-plots}. The plot in Figure \ref{fig:ffstep-plots} show no signs of a stair-casing behavior and the errors are about two times the mesh-size of the spacial variables, which seems reasonable. With increasing Mach number, the flow develops an additional jump, as can be seen in the plots. The Mach number of this transition is roughly captured by the locations of the highest refinement in Figure \ref{fig:ffstep-snapshots}.

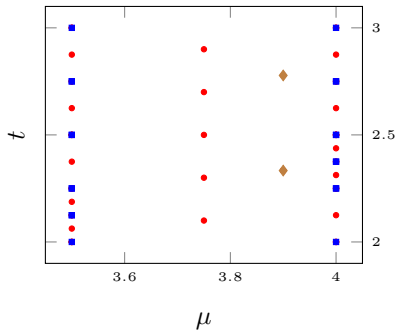
\begin{figure}[htb]

  \begin{center}
  \begin{tikzpicture}
  
    \begin{axis}[axis function, axis scatter, xlabel=$\mu$, ylabel=$t$]

      \AddNodes{mu}{time}{pics/ffstep_params.csv}

    \end{axis}
  \end{tikzpicture}
  \end{center}
  
  \caption{Same as in Figure \ref{fig:two-shocks-fine}, for the forward facing step example, Section \ref{sec:ffstep}.}

  \label{fig:ffstep-snapshots}

\end{figure}

\begin{figure}
  \subfloat[][TSI \centering $\mu=2.33$]{
    \includegraphics[width=0.2\textwidth]{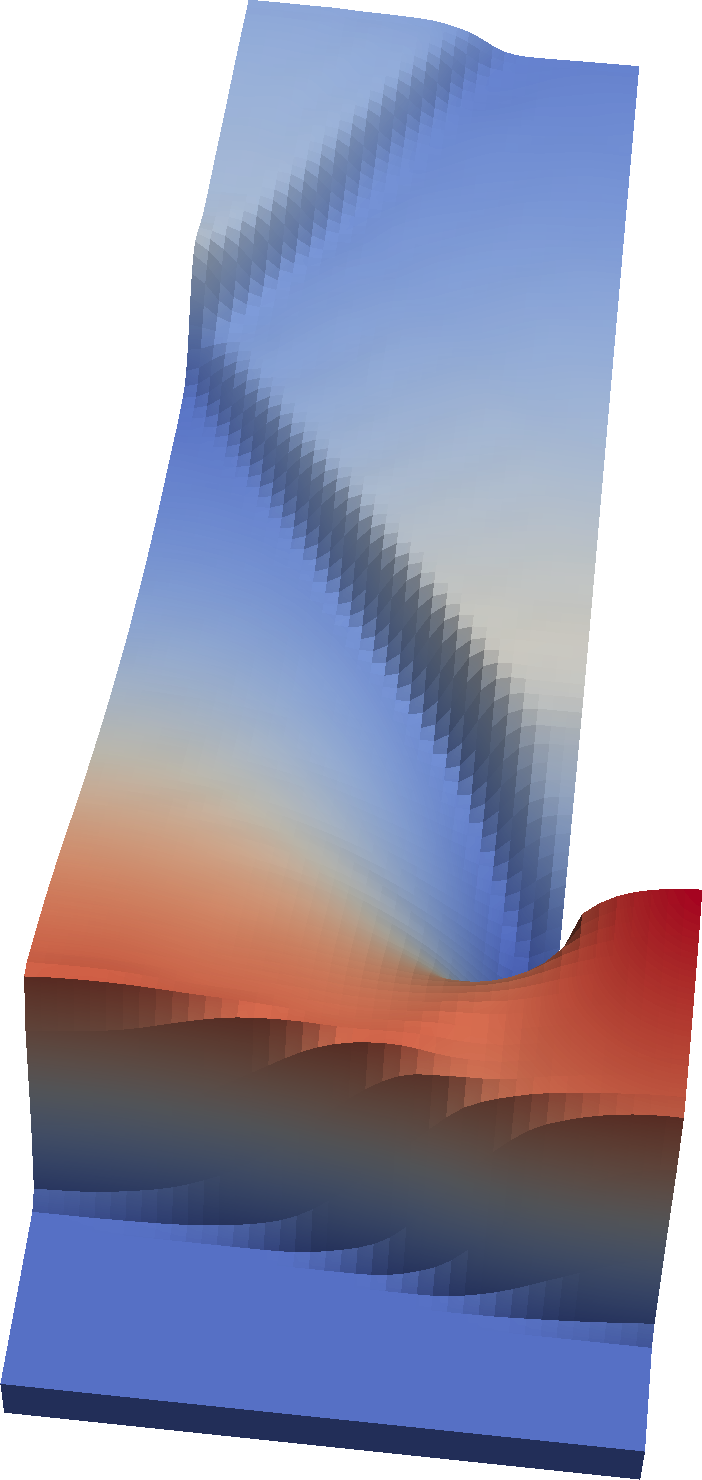}
  }
  \hfill
  \subfloat[][true \centering $\mu=2.33$]{
    \includegraphics[width=0.2\textwidth]{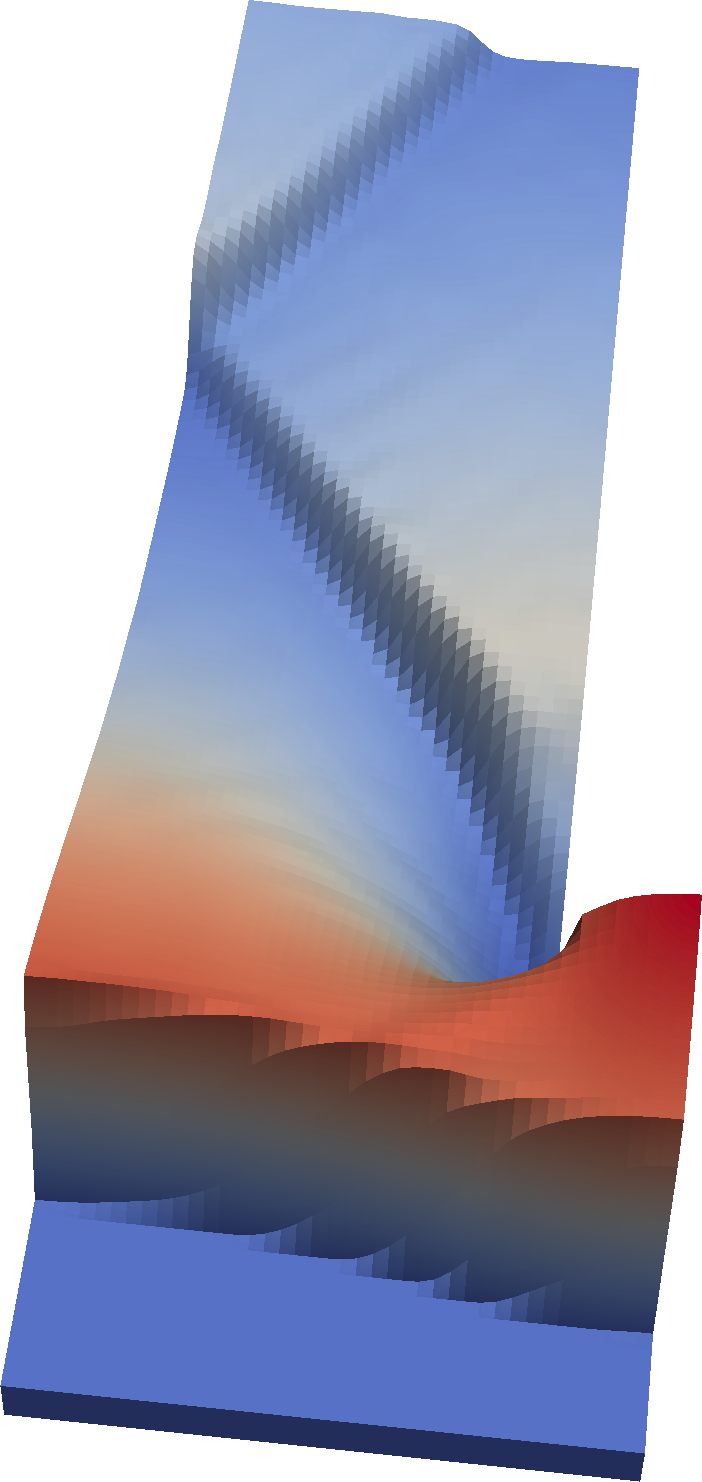}
  }
  \hfill
  \subfloat[][TSI \centering $\mu=2.77$]{
    \includegraphics[width=0.2\textwidth]{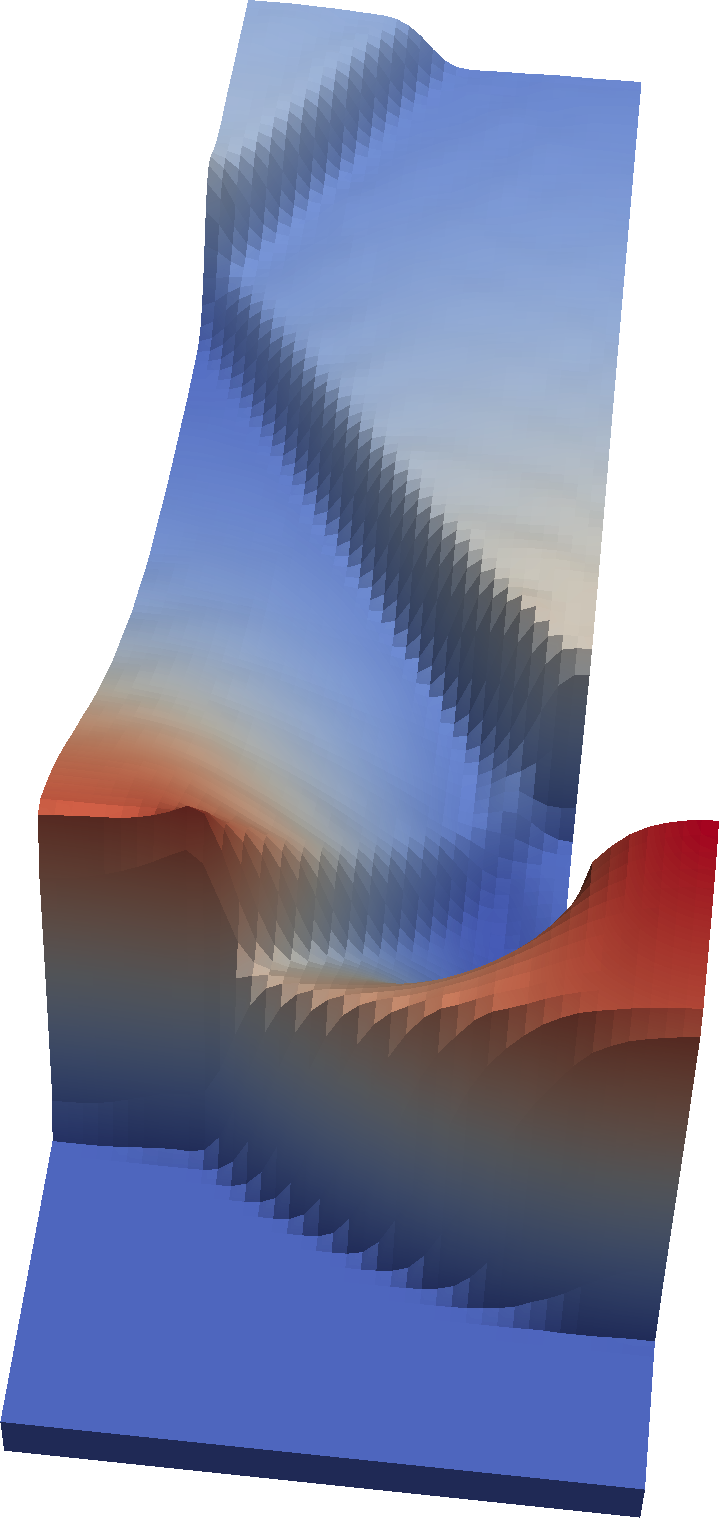}
  }
  \hfill
  \subfloat[][true  \centering $\mu=2.77$]{
    \includegraphics[width=0.2\textwidth]{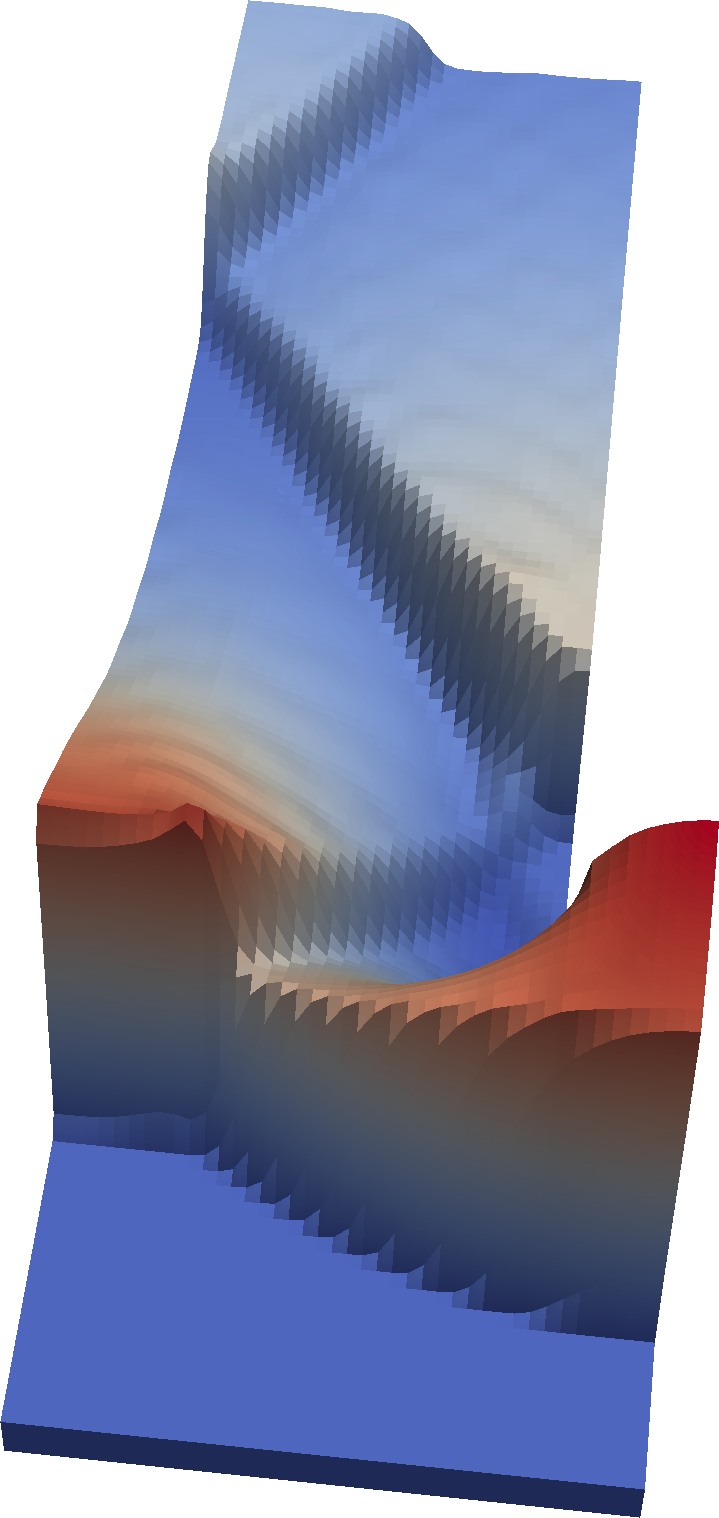}
  }
  
  \caption{Solutions of the forward facing step example.}
  
  \label{fig:ffstep-plots}
\end{figure}

\section{Conclusion}

The major assumption underlying the TSI is that the jump sets of the snapshots and of the target function $u(\cdot, \mu)$ can be aligned. For many problems this is overly restrictive. However, the changes in the shock topology structure are typically local in parameter space, so that we use $h$ and $hp$ adaptive methods to resolve them. Since local refinements scale badly in high dimensions, we only apply this $hp$-refinement in one parameter dimension and use a ``tensor product'' like construction for higher parameter dimensions, based on the ability of the TSI to align singularities with coordinate axes. We have proven that for some simple model cases this algorithm achieves exponential convergence rates and tested it with numerical experiments. 

Comparable to full tensor products, this construction alone does not avoid the curse of dimensionality, but is a possible starting point for methods that do. An investigation is left for future research. The method presented in this article can successfully deal with a limited number of jump set topology changes, as found e.g. in many steady state problems, but scales badly if they are numerous. However, these changes are not only local in parameter, but also local in space. This is expected to improve the situation and left for future research.

\bibliographystyle{plain}
\bibliography{tsi-hp}

\end{document}